\def\colorful{0}
\newif\ifhyper\IfFileExists{hyperref.sty}{\hypertrue}{\hyperfalse}
\ifhyper\usepackage{hyperref}\fi
\def\nnewcolor{0}
\newcommand{\nnew}[1]{{\color{red} #1}}
\newcommand{\nnew}[1]{#1}
\newcommand{\new}[1]{{\color{red} #1}}
\newcommand{\new}[1]{{#1}}
\newtheorem{informal theorem}[theorem]{Theorem (informal statement)}
\newtheorem{claim}[theorem]{Claim}
\newcommand{\eqdef}{\stackrel{{\mathrm {\footnotesize def}}}{=}}
\newcommand{\R}{\mathbb{R}}
\newcommand{\Z}{\mathbb{Z}}
\newcommand{\E}{\mathbf{E}}
\newcommand{\eps}{\epsilon}
\newcommand{\dtv}{d_{\mathrm TV}}
\newcommand{\TV}{\mathrm{d_{TV}}}
\newcommand{\KL}{\mathrm{KL}}
\newcommand{\pini}{P^{\mathrm{in}}_i}
\newcommand{\pouti}{P^{\mathrm{out}}_i}
\newcommand{\cin}{C^\mathrm{in}}
\newcommand{\cout}{C^\mathrm{out}}
\newcommand{\mle}{\mathop{\hat{f}_n}}
\newcommand{\pmin}{\mathop{p_{\mathrm{min}}}}
\newcommand{\vol}{\mathrm{vol}}
\title[On the MLE of Multivariate Log-concave Densities]{Near-Optimal Sample Complexity Bounds for Maximum Likelihood Estimation of 
Multivariate Log-concave Densities}
\begin{document}

\maketitle




\begin{abstract}
We study the problem of learning multivariate log-concave densities
with respect to a global loss function. We obtain the first upper bound on the sample complexity 
of the maximum likelihood estimator (MLE) for a log-concave density on $\R^d$, for all $d \geq 4$.
Prior to this work, no finite sample upper bound was known for this estimator in more than $3$ dimensions.

In more detail, we prove that for any $d \geq 4$ and $\eps>0$, given 
$\tilde{O}_d((1/\eps)^{(d+3)/2})$ samples drawn from an unknown log-concave density $f_0$ on $\R^d$,
the MLE outputs a hypothesis $h$ that with high probability is $\eps$-close
to $f_0$, in squared Hellinger loss. For any $d \geq 2$, a sample complexity lower bound of $\Omega_d((1/\eps)^{(d+1)/2})$
was previously known for any learning algorithm that achieves this guarantee. 
We thus establish that the sample complexity of the log-concave MLE is near-optimal for $d \geq 4$, 
up to an $\tilde{O}(1/\eps)$ factor.
\end{abstract}



\section{Introduction} \label{sec:intro}

\subsection{Background}

The general task of estimating a probability distribution under certain 
qualitative assumptions about the {\em shape} of its probability density function
has a long history in statistics, dating back to the pioneering work of 
~\cite{Grenander:56} 
who analyzed the maximum likelihood estimator of a univariate monotone density. 
Since then, shape constrained density estimation has been a very active research area 
with a rich literature in mathematical statistics and, more recently, in computer science. 
A wide range of shape constraints have been studied, including unimodality, convexity and concavity, 
$k$-modality, log-concavity, and $k$-monotonicity.
The reader is referred to~\cite{BBBB:72} for a summary of the early work and to~\cite{GJ:14} 
for a recent book on the subject. (See Section~\ref{ssec:related} for a succinct summary of prior work.)
The majority of the literature has studied the univariate (one-dimensional) setting, 
which is by now fairly well-understood for a range of distributions. On the other hand,
the multivariate setting and specifically the regime of {\em fixed} dimension is significantly more
challenging and poorly understood for many natural distribution families.

In this work, we focus on the family of {\em multivariate} log-concave distributions. A distribution on $\R^d$
is log-concave if the logarithm of its probability density function is concave (see Definition~\ref{def:lc}).
Log-concave distributions constitute a rich non-parametric family
encompassing a range of fundamental distributions, including
uniform, normal, exponential, logistic, extreme value,
Laplace, Weibull, Gamma, Chi and Chi-Squared, and Beta distributions (see, e.g.,~\cite{BagnoliBergstrom05}).
Due to their fundamental nature and appealing properties,
log-concave distributions have been studied in a range of fields including
economics~\cite{An:95}, probability theory~\cite{SW14-survey},
computer science~\cite{LV07}, and geometry~\cite{Stanley:89}.

The problem of {\em density estimation} for log-concave distributions is of central importance
in the area of non-parametric shape constrained estimation~\cite{Walther09, SW14-survey, Sam17-survey}
and has received significant attention during the past decade in statistics~\cite{Cule10a, 
DumbgenRufibach:09, DossW16, ChenSam13, KimSam16, BalDoss14, HW16} 
and theoretical computer science~\cite{CDSS13, CDSS14, ADLS17, CanonneDGR16, DKS16-proper-lc, DiakonikolasKS17-lc}.

\subsection{Our Results and Comparison to Prior Work}
In this work, we analyze the global convergence rate of the maximum likelihood estimator (MLE)
of a multivariate log-concave density. Formally, we study the following fundamental question: 
\begin{center}
{\em How many samples are information-theoretically sufficient
so that the MLE of an arbitrary \\ log-concave density on $\R^d$ learns the underlying density, within squared Hellinger loss $\eps$?}
\end{center}
Perhaps surprisingly, despite significant effort within the statistics community 
on analyzing the log-concave MLE, our understanding of its finite sample performance 
in constant dimension has remained poor. 
The only result prior to this work that addressed the sample complexity 
of the MLE in more than one dimensions is by 
~\cite{KimSam16}.
Specifically,~\cite{KimSam16} obtained the following results: 
\begin{itemize}
\item[(1)] a sample complexity {\em lower bound} of
$\Omega_d \left( (1/\eps)^{(d+1)/2} \right)$ that applies to {\em any} estimator 
for all $d \geq 2$, and 
\item[(2)] a sample complexity {\em upper bound} for the log-concave MLE, that is near-optimal (within logarithmic factors) 
for $d \leq 3$. 
\end{itemize}
{\em Prior to our work, no finite sample upper bound was known for the log-concave 
MLE even for $d=4$.}

In recent related work, 
~\cite{DiakonikolasKS17-lc} 
established a finite sample complexity upper bound for learning multivariate 
log-concave densities under global loss functions. 
Specifically, the estimator analyzed in~\cite{DiakonikolasKS17-lc} 
uses $\tilde{O}_d \left( (1/\eps)^{(d+5)/2} \right)$\footnote{The $\tilde{O}(\cdot)$ notation hides 
logarithmic factors in its argument.} samples
and learns a log-concave density on $\R^d$ within squared Hellinger loss 
$\eps$, with high probability. 
We remark that the upper bound of~\cite{DiakonikolasKS17-lc}
was obtained by analyzing an estimator that is {\em substantially different} than the log-concave MLE. 
Moreover, the analysis in~\cite{DiakonikolasKS17-lc} has no implications on the performance of the MLE. 
Interestingly, some of the technical tools employed in~\cite{DiakonikolasKS17-lc} 
will be useful in our current setting.

Due to the fundamental nature of the MLE, understanding its performance merits investigation
in its own right. In particular, the log-concave MLE has an intriguing geometric structure
that is a topic of current investigation~\cite{Cule10a, RSU17}.
The output of the log-concave MLE 
satisfies several desirable properties 
that may not be automatically satisfied by surrogate estimators. 
These include the log-concavity of the hypothesis, 
the paradigm of log-concave projections and their continuity in Wasserstein distance, affine equivariance, 
one-dimensional characterization, and adaptation (see, e.g.,~\cite{Sam17-survey}). 
An additional motivation comes from a recent conjecture (see, e.g.,~\cite{Wellner15}) that 
for $4$-dimensional log-concave densities the MLE may have sub-optimal sample complexity.  
These facts provide strong motivation for characterizing the sample complexity of the log-concave MLE
in any dimension.

To formally state our results, we will need some terminology.
The {\em squared Hellinger distance} between two density functions $f, g: \R^d \to \R_+$ is defined as 
$h^2(f, g) = (1/2) \cdot \int_{\R^d} ( \sqrt{f(x)} - \sqrt{g(x)})^2 dx$.

We now define our two main objects of study:

\begin{definition}[Log-concave Density] \label{def:lc}
A probability density function $f : \R^d \to \R_+$, $d \in \Z_+$, is called {\em log-concave}
if there exists an upper semi-continuous concave function $\phi: \R^d \to [-\infty, \infty)$
such that $f(x) = e^{\phi(x)}$ for all $x \in \R^d$.
We will denote by $\mathcal{F}_d$ the set of upper semi-continuous,
log-concave densities with respect to the Lebesgue
measure on $\R^d$.
\end{definition}

\begin{definition}[Log-concave MLE] \label{def:mle}
Let $f_0 \in \mathcal{F}_d$ and $X_1, \ldots, X_n$ be iid samples from $f_0$.
The {\em maximum likelihood estimator}, $\mle$, 
is the density $\mle \in \mathcal{F}_d$ which maximizes
$\frac{1}{n} \sum_{i=1}^n \log(f(X_i))$ over all $f \in \mathcal{F}_d$.
\end{definition}

\noindent We can now state our main result:

\begin{theorem}[Main Result] \label{thm:main}
Fix $d \in \Z_+$ and $\eps \in (0,1)$.
Let $ n = \nnew{\tilde{\Omega}}_d \left( (1/\eps)^{(d+3)/2} \right)$.
For any $f_0 \in  \mathcal{F}_d$, with probability at least $9/10$ over the $n$ samples from $f_0$, we have that
$h^2(\mle, f_0) \leq \eps$.
\end{theorem}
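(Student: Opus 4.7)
The plan is to combine the standard Wong--Shen / van de Geer Hellinger-rate theorem for nonparametric MLE with tight bracketing entropy estimates for log-concave densities on a compact domain, together with a truncation argument that handles the non-compact support of $f_0$. At a high level, I reduce bounding $h^2(\mle, f_0)$ to controlling a localized empirical process indexed by Hellinger neighborhoods of $f_0$ inside $\mathcal{F}_d$, then bound that process via chaining against carefully chosen bracketing numbers.

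First, by the affine equivariance of both $\mathcal{F}_d$ and the log-concave MLE, I assume without loss of generality that $f_0$ is in isotropic position. Standard sub-exponential concentration for isotropic log-concave densities then places all but $\eps$ of the mass of $f_0$ on a cube $K$ of side length $L \leq \poly(d, \log(1/\eps))$. A union bound combined with the characterization of the MLE as piecewise log-linear on the convex hull of the samples shows that, with probability at least $9/10$, $\mle$ likewise assigns at most $\eps$ mass outside a slightly enlarged cube $K'$. The squared-Hellinger contribution from outside $K'$ is then $O(\eps)$, so it suffices to bound $h^2(\mle, f_0)$ restricted to $K'$.

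Second, I invoke bracketing entropy bounds for the localized class $\mathcal{F}_d|_{K'}$ of the Kim--Samworth / Doss--Wellner type, as derived and refined in~\cite{DiakonikolasKS17-lc}, which are obtained by approximating concave log-densities on $K'$ by piecewise-linear ``tent'' functions over a simplicial subdivision. These yield a polynomial bracketing bound of the form
\[
\log N_{[]}(u, \mathcal{F}_d|_{K'}, h) \; \leq \; C_d \cdot u^{-d/2} \cdot \poly(\log(1/u))
\]
for $u$ in the relevant range. Plugging this into the MLE rate theorem, the Hellinger rate $\delta_n$ of $\mle$ should satisfy $\sqrt{n}\,\delta_n^2 \geq c \cdot J_{[]}(\delta_n)$, where $J_{[]}$ is the corresponding bracketing integral.

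The main obstacle is that for $d \geq 4$ this entropy integral behaves like $\int_0^\delta u^{-d/4}\,du$ and thus diverges at the origin, so standard Dudley-style chaining does not close the argument; this is precisely the barrier that prevented~\cite{KimSam16} from extending their analysis beyond $d \leq 3$. To recover the target rate $n \sim \eps^{-(d+3)/2}$, I would (i) peel $\mathcal{F}_d|_{K'}$ into Hellinger shells of the form $\{f : 2^{-k} \leq h(f, f_0) \leq 2^{-k+1}\}$, (ii) use sharper \emph{local} bracketing numbers inside each shell, exploiting that log-concavity forces any two nearby densities to essentially differ only on a region of small Lebesgue measure and thus reduces the effective number of free parameters, and (iii) replace pure chaining with a Bernstein-type variance argument in the large-entropy regime to save an $\eps^{1/2}$ factor over the naive bound. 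Converting the resulting Hellinger rate into a sample complexity would yield the claimed $\tilde{O}_d((1/\eps)^{(d+3)/2})$.
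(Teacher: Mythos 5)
There is a genuine gap here, and it sits exactly at the step you yourself flag as ``the main obstacle.'' Your proposal is, in outline, the bracketing-entropy route of \cite{KimSam16}: localize to a compact set, bound $\log N_{[]}(u,\mathcal{F}_d|_{K'},h)$, and feed this into a Wong--Shen / van de Geer rate theorem. As you correctly observe, for $d\geq 4$ the resulting entropy integral diverges and the standard machinery does not close; this is precisely why no finite-sample bound for the MLE was known for $d\geq 4$ prior to this paper. The three repairs you propose do not fill the hole. Peeling into Hellinger shells and Bernstein-type variance control are already built into the sharpest versions of the MLE rate theorem and do not, by themselves, defeat a divergent entropy integral. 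The entire weight of your argument therefore rests on step (ii), the claim that log-concavity yields much smaller \emph{local} bracketing numbers because nearby densities ``differ only on a region of small Lebesgue measure.'' You give no argument for this, and it is far from innocuous: \cite{KimSam16} prove bracketing entropy \emph{lower} bounds for this class, which are the basis of the conjecture (cf.\ \cite{Wellner15}) that the MLE might actually be suboptimal for $d\geq 4$. So the one step that would carry your proof is exactly the one that is unsupported and possibly false as stated.

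The paper takes a genuinely different route that avoids function-class entropy altogether. The key structural result (Lemma~\ref{lem:alt_convex_set_prob}) is a uniform deviation bound over \emph{convex sets}: with high probability, $|f_0(C)-f_n(C)|\leq\delta$ simultaneously for every convex $C\subseteq\R^d$. Since convex sets have infinite VC dimension, this cannot follow from the VC inequality directly; instead one restricts attention to the superlevel sets $S_i=L_{f_0}(M_{f_0}e^{-i})$ (whose volumes are controlled by log-concavity of $f_0$), approximates $C\cap S_i$ from inside and outside by polytopes with boundedly many facets, and applies the VC inequality to the resulting finite-VC family. Because the superlevel sets of any log-concave $f$ are convex, this immediately gives uniform control of $|{\Pr}_{X\sim f_0}[f(X)\geq t]-{\Pr}_{X\sim f_n}[f(X)\geq t]|$, and hence of the gap between the population and empirical log-likelihoods of the truncation $\ln\max(f,\pmin)$; a separate argument bounds $M_{\mle}$ so that the truncation range is only $O(\log n)$, and a short KL/total-variation computation converts this into the Hellinger bound. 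To salvage your write-up you would need either to actually prove the local entropy improvement asserted in (ii), or to switch to an argument of this convex-set/VC type.
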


\noindent See Theorem~\ref{thm:h-main-result} for a more detailed statement.
The aforementioned lower bound of~\cite{KimSam16} implies
that our upper bound is tight up to an $\tilde{O}_d (\eps^{-1})$ multiplicative factor.


\subsection{Related Work} \label{ssec:related}
Shape constrained density estimation is a vibrant research field within mathematical statistics. 
Statistical research in this area started in the 1950s and has seen a recent surge of
research activity, in part due to the ubiquity of structured distributions in various domains. 
The standard method used in statistics to address density estimation problems of this form 
is the MLE.
See~\cite{Brunk:58, PrakasaRao:69, Wegman:70, 
HansonP:76, Groeneboom:85, Birge:87, Birge:87b,Fougeres:97,ChanTong:04,BW07aos, JW:09, 
DumbgenRufibach:09, BRW:09aos, GW09sc, BW10sn, KoenkerM:10aos, Walther09, ChenSam13, KimSam16, BalDoss14, HW16}
for a partial list of works analyzing the MLE for various distribution families.
During the past decade, there has been a large body of work on shape constrained density estimation 
in computer science with a focus on both sample and computational efficiency~\cite{DDS12soda, DDS12stoc, DDOST13focs, CDSS13, CDSS14, CDSS14b, 
ADHLS15, ADLS17, DKS15, DKS15b, DDKT15, DKS16, VV16, DiakonikolasKS17-lc}.

Density estimation of log-concave densities has been extensively investigated.
The univariate case is by now well understood~\cite{DL:01, CDSS14, ADLS17, KimSam16, HW16}. 
For example, it is known~\cite{KimSam16, HW16} that $\Theta(\eps^{-5/4})$ samples
are necessary and sufficient to learn an arbitrary log-concave density over $\R$ within squared Hellinger loss $\eps$.
Moreover, the MLE is sample-efficient~\cite{KimSam16, HW16} 
and attains certain adaptivity properties~\cite{KGS16}. A recent line of work in computer 
science~\cite{CDSS13, CDSS14, ADLS17, CanonneDGR16, DKS16-proper-lc}
gave efficient algorithms for log-concave density estimation 
under the total variation distance. 

Density estimation of multivariate log-concave densities has been systematically studied as well.
A line of work~\cite{Cule10a, DumbgenRufibach:09, DossW16, ChenSam13, BalDoss14} has obtained
a complete understanding of the global consistency properties of the MLE for any dimension.
However, both the rate of convergence of the MLE and the minimax rate of convergence remain unknown for $d \geq 4$.
For $d \leq 3$,~\cite{KimSam16} show that the MLE is sample near-optimal (within logarithmic factors) 
under the squared Hellinger distance. \cite{KimSam16} also prove bracketing entropy lower bounds 
suggesting that the MLE may be sub-optimal for $d > 3$ (also see~\cite{Wellner15}).

\subsection{Technical Overview}
Here we provide a brief overview of our proof in tandem with a comparison to prior work.
\new{We start by noting that the previously known sample complexity upper bound of the log-concave 
MLE for $d \leq 3$~\cite{KimSam16}  was obtained by 
bounding from above the bracketing entropy of the class. As we explain below, our argument
is more direct making essential use of the VC inequality (Theorem~\ref{thm:vc}), 
a classical result from empirical process theory.} 
In contrast to prior work on log-concave density estimation
~\cite{KimSam16, DiakonikolasKS17-lc} which relied on approximations to (log)-concave
{\em functions}, we start by considering approximations to {\em convex sets}. 
Let $f_0$ be the target log-concave density. 
We show (Lemma~\ref{lem:alt_convex_set_prob}) that given sufficiently many samples from $f_0$, 
with high probability, for any convex set $C$ the empirical mass of $C$ 
and the probability mass of $C$ under $f_0$ are close to each other. 
We then leverage this structural lemma to analyze the error in the log-likelihood 
of log-concave densities, using the fact that the superlevel 
sets of a log-concave density are convex.

We remark that our aforementioned structural result (Lemma~\ref{lem:alt_convex_set_prob})
crucially requires the assumption of the log-concavity of $f_0$. Naively, one may think that 
this lemma follows directly from the VC inequality. 
Recall however that the VC-dimension of the family of convex sets is infinite, even in the plane.
For example, for the uniform distribution over the unit circle, 
a similar result does {\em not} hold for {\em any} finite number of samples (the intersection of the convex hull of any subset $S$ of the unit circle with the unit circle is $S$ itself, so we would need uniform convergence on all subsets of the unit circle), 
and so we need to use the fact that $f_0$ is log-concave.
To prove our lemma, we consider judicious approximations of 
the convex set $C$ with convex polytopes using known results from convex geometry. 
In more detail, we consider approximations to the convex set $C$ on the inside and outside 
with close probabilities under $f_0$ to the convex set from a family with a bounded VC-dimension.

For any log-concave density $f$, the probabilities of any superlevel 
set are close under the empirical distribution and $f_0$. If 
$\log f$ were bounded, then that would mean that the empirical log-likelihood of $f$
and the log-likelihood of $f$ under $f_0$ were close. Unfortunately, for any density $f$, 
$\log f$ is unbounded from below. 
To deal with this issue, we instead consider $\log(\max(f, p_{\min}))$, 
for some carefully chosen probability value $p_{\min}$ such that 
we could ignore the contribution of the density below 
$p_{\min}$ if $f$ is close to $f_0$. If we can bound the range of $\log(\max(f, p_{\min}))$, 
we can show that its expectation under $f_0$ and its empirical version 
are close to each other (see Lemma~\ref{lem:technical_f}).
To bound the range, we show that if the maximum value of $f$ is much larger than the maximum of $f_0$, 
then $f$ has small probability mass outside a set $A$ of small volume; since $A$ has small volume, 
we see many samples outside it, and so the empirical log-likelihood of $f$ is smaller than 
the empirical log-likelihood of $f_0$. Using this fact, we can show that for the MLE 
$\hat{f_n}$ the expectation of $\log(\max( \hat{f_n}, p_{\min}))$ 
is large under $f_0$ and then that $\hat{f_n}$ is close in Hellinger distance to $f_0$.

\subsection{Organization}
After setting up the required preliminaries in Section~\ref{sec:prelims}, in 
Section~\ref{sec:main-section} we present the proof of our main result, modulo 
the proof of our main lemma (Lemma~\ref{lem:alt_convex_set_prob}).
In Section~\ref{sec:warmup}, we give a slightly weaker version of 
Lemma~\ref{lem:alt_convex_set_prob} that has a significantly simpler proof.
In Section~\ref{sec:full-proof}, we present the proof of Lemma~\ref{lem:alt_convex_set_prob}.
Finally, we conclude with a few open problems in Section~\ref{sec:conc}.

\section{Preliminaries} \label{sec:prelims}

\noindent {\bf Notation and Definitions.}
\new{For $m \in \Z_+$, we denote $[m] \eqdef \{1, \ldots, m\}$.}
Let $f: \R^d \to \R$ be a Lebesgue measurable function.
We will use $f(A)$ to denote $\int_{A} f(x) dx$.
A Lebesgue measurable function $f: \R^d \to \R$ is a probability density function (pdf)
if $f(x) \geq 0$ for all $x \in \R^d$ and  $\int_{\R^d} f(x) dx = 1$.
Let $f, g: \R^d \to \R_+$ be probability density functions.
The {\em squared Hellinger distance} between $f, g$ is defined as $H^2(f, g) = \frac{1}{2} \int \left( \sqrt{f(x)} - \sqrt{g(x)} \right)^2 dx$.
The {\em total variation distance} between $f, g$ 
is defined as $\dtv(f, g) = \sup_{S} |f(S) - g(S)|$, where
the supremum is over all Lebesgue measurable subsets of the domain.
We have that $\dtv\left(f, g \right) = (1/2) \cdot \| f -g  \|_1 = (1/2) \cdot \int_{\R^d} |f(x) - g(x)| dx.$
The  {\em Kullback-Leibler (KL) divergence from $g$ to $f$} is defined as
$\KL(f || g) = \int_{-\infty}^{\infty} f(x) \ln \frac{f(x)}{g(x)} dx$.

\new{For $f : A \rightarrow B$ and $A' \subseteq A$, 
the restriction of $f$ to $A'$ is the function $f\vert_{A'} : A' \rightarrow B$.}
For $y \in [0,\infty)$ and $f: \R^d \to [0,\infty)$
we denote by $L_f(y) \eqdef \{x \in \R^d \mid f(x) \geq y\}$
its {\em superlevel sets}. If $f$ is log-concave, $L_f(y)$ is a convex set for all $y \in \R_+$.
For a function $f: \R^d \to [0,\infty)$, we will denote by $M_f$ its maximum value.

\smallskip

\noindent {\bf The VC inequality.}
We start by recalling the notion of VC dimension.
We say that a set $X \subseteq \mathbb{R}^d$ is \emph{shattered} by a collection $\mathcal{A}$
of subsets of $\R^d$, if for every $Y \subseteq X$ there exists $A \in \mathcal{A}$ such that $A \cap X = Y$.
The \emph{VC dimension} of a family $\mathcal{A}$ of subsets of $\mathbb{R}^d$ 
is defined to be the maximum cardinality of a subset $X \subseteq \mathbb{R}^d$ 
that is shattered by $\mathcal{A}$. If there is a shattered subset of size $s$ 
for all $s \in \mathbb{Z}_+$, then we say that the VC dimension of $\mathcal{A}$ is $\infty$.

The empirical distribution, $f_n$, corresponding to a density $f : \R^d \to \R_+$ 
is the discrete probability measure defined by 
$f_n(A)=(1/n) \cdot \sum_{i=1}^{n} \mathbf{1}_{A}(X_i)$, where the $X_i$ are iid samples drawn
from $f$ and  $\mathbf{1}_{S}$ is the characteristic function of the set $S$.
Let $f : \R^d \to \R$ be a Lebesgue measurable function.
Given a family $\mathcal{A}$ of measurable subsets of $\R^d$, 
we define the $\mathcal{A}$-norm of $f$ by $\|f\|_\mathcal{A} = \sup_{A \in \mathcal{A}}|f(A)|$.
The VC inequality states the following:
\begin{theorem}[VC inequality, see~\cite{DL:01}, p.~31]\label{thm:vc}
Let $f : \R^d \to [0,\infty)$ be a probability density function 
and $f_n$ be the empirical distribution obtained after drawing $n$ samples from $f$. 
Let $\mathcal{A}$ be a family of subsets over $\mathbb{R}^d$ with VC dimension $V$. 
Then $\E[\| f - f_n \|_{\mathcal{A}}] \leq C \sqrt{V / n}$, for some universal constant $C>0$.
\end{theorem}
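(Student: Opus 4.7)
The plan is to prove the VC inequality along the classical route of ghost-sample symmetrization, Rademacher randomization, Sauer-Shelah, and chaining. The only reason the statement features the sharp rate $\sqrt{V/n}$ rather than $\sqrt{(V/n)\log(n/V)}$ is the use of a chaining bound in place of a naive union bound at the last step, so most of the work is standard and the hard part is isolated.

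First, I would introduce an independent ghost sample $X_1', \dots, X_n' \sim f$ with empirical distribution $f_n'$. Since $f(A) = \E[f_n'(A)]$ for every $A$, Jensen's inequality inside the supremum gives
\[
\E\bigl[ \|f - f_n\|_{\mathcal{A}} \bigr] \;\leq\; \E\bigl[ \|f_n' - f_n\|_{\mathcal{A}} \bigr].
\]
Letting $\sigma_1, \dots, \sigma_n$ be iid Rademacher signs independent of both samples, the symmetry of $\mathbf{1}_A(X_i) - \mathbf{1}_A(X_i')$ lets me insert the $\sigma_i$ without changing the distribution, and a triangle inequality yields
\[
\E\bigl[ \|f_n' - f_n\|_{\mathcal{A}} \bigr] \;\leq\; 2\, \E\!\left[ \sup_{A \in \mathcal{A}} \left| \frac{1}{n} \sum_{i=1}^n \sigma_i\, \mathbf{1}_A(X_i) \right| \right].
\]
So it remains to bound this conditional Rademacher complexity by $O(\sqrt{V/n})$.

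Second, I would fix the sample and reduce the supremum to a supremum over the finite set of realized labelings $T_n = \{(\mathbf{1}_A(X_1), \dots, \mathbf{1}_A(X_n)) : A \in \mathcal{A}\} \subseteq \{0,1\}^n$. By the Sauer-Shelah lemma, $|T_n| \leq (en/V)^V$. Conditional on $X_1, \dots, X_n$, the Rademacher process $v \mapsto \frac{1}{n} \sum_i \sigma_i v_i$ on $T_n$ has sub-Gaussian increments with respect to the scaled Hamming metric $\rho(v,w) = \bigl( \frac{1}{n} \sum_i (v_i - w_i)^2 \bigr)^{1/2}$ of parameter $\rho(v,w)/\sqrt{n}$, so any two-point Hoeffding bound for a single $v$ upgrades, via a chaining/maximal-inequality argument, to a bound on the supremum in terms of the $\rho$-covering numbers of $T_n$.

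Third, I would invoke Haussler's theorem to control these covering numbers. It states that for any class $\mathcal{A}$ of VC dimension $V$, the $L^2$ covering numbers satisfy $N(\epsilon, T_n, \rho) \leq K (V+1)(4e)^{V+1} \epsilon^{-2V}$, independently of $n$. Feeding this into Dudley's entropy integral yields
\[
\E\!\left[ \sup_{v \in T_n} \Big| \tfrac{1}{n}\sum_i \sigma_i v_i \Big| \,\Big|\, X_1,\dots,X_n \right] \;\leq\; \frac{C_1}{\sqrt{n}} \int_0^1 \sqrt{\log N(\epsilon, T_n, \rho)}\, d\epsilon \;\leq\; C_2 \sqrt{V/n},
\]
since $\int_0^1 \sqrt{V \log(1/\epsilon)}\, d\epsilon = O(\sqrt{V})$. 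Taking expectation over $X_1, \dots, X_n$ and combining with the symmetrization step of the first paragraph produces the advertised bound.

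The main obstacle is precisely this final step: the polynomial growth $|T_n| \leq (en/V)^V$ alone, combined with a union bound over $T_n$ of a one-dimensional Hoeffding tail, yields only $O\bigl( \sqrt{(V/n)\log(n/V)} \bigr)$ and fails for $n$ comparable to $V$. Getting rid of the logarithmic factor requires Haussler's sample-size-free bound on VC $L^2$ covering numbers and Dudley's chaining integral; the rest of the argument is routine.
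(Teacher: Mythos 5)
The paper does not prove this statement at all: Theorem~\ref{thm:vc} is imported as a black-box citation from Devroye and Lugosi, so there is no internal proof to compare yours against. Your outline is the standard modern proof of the sharp (log-free) VC inequality and is essentially correct: symmetrization via a ghost sample and Jensen, Rademacher randomization, reduction to the finite trace set $T_n$, and then Haussler's $n$-independent bound on the $L^2$ covering numbers fed into Dudley's entropy integral. You have also correctly identified where the difficulty is concentrated --- Sauer--Shelah plus a union bound only gives $O(\sqrt{(V/n)\log(n/V)})$, and the chaining step is what removes the logarithm. Two small points if you were to write this out in full: the Dudley bound controls the supremum of increments, so you should anchor the chain at a fixed point of $T_n$ (or adjoin the zero vector) and absorb the $O(1/\sqrt{n})$ contribution of that single point into the constant; and the constant $C$ you obtain depends on the constants in Haussler's theorem, which is fine since the statement only asserts a universal constant. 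For the purposes of this paper, none of this machinery is needed --- the theorem is used only as a cited tool in Claims~\ref{claim:easy_vc} and~\ref{claim:hard_vc}.
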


We will also require a high probability version of the VC inequality
which can be obtained using the following standard uniform convergence bound:

\begin{theorem}[see~\cite{DL:01}, p.~17] \label{thm:A-expect-bound}
Let $\mathcal{A}$ be a family of subsets over $\R^d$ 
and $f_n$ be the empirical distribution of $n$ samples from the density $f: \R^d \to [0,\infty)$. 
Let $X$ be the random variable $\|f - f_n\|_{\mathcal{A}}$. 
Then for all $\delta>0$, we have that
$\Pr[X - \E[X] > \delta] \leq e^{-2n \delta^2}$.
\end{theorem}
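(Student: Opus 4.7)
The plan is to prove this tail bound as a direct application of McDiarmid's bounded differences inequality. Write $X = X(X_1,\ldots,X_n) = \sup_{A \in \mathcal{A}} |f(A) - f_n(A)|$, where $f_n(A) = (1/n) \sum_{i=1}^n \mathbf{1}_A(X_i)$ depends on the iid sample. Since only the empirical term depends on the data, it suffices to control how much $X$ can change when a single coordinate $X_i$ is replaced by an independent copy $X_i'$.

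First, I would establish the bounded differences property. Fix $i \in [n]$ and let $f_n'$ denote the empirical distribution obtained after replacing $X_i$ by $X_i'$. For any $A \in \mathcal{A}$, the two empirical masses differ by
\[
|f_n(A) - f_n'(A)| = \frac{1}{n}\bigl|\mathbf{1}_A(X_i) - \mathbf{1}_A(X_i')\bigr| \leq \frac{1}{n}.
\]
Hence by the reverse triangle inequality, $\bigl||f(A) - f_n(A)| - |f(A) - f_n'(A)|\bigr| \leq 1/n$, and taking the supremum over $A \in \mathcal{A}$ on each side yields $|X - X'| \leq 1/n$, where $X'$ is the value of the statistic on the perturbed sample. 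Thus $X$ satisfies the bounded differences condition with constants $c_i = 1/n$ for every $i$.

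Second, I would invoke McDiarmid's inequality, which states that if $X$ is a function of $n$ independent random variables with bounded differences $c_i$, then
\[
\Pr\bigl[X - \E[X] > \delta\bigr] \leq \exp\!\left(-\frac{2\delta^2}{\sum_{i=1}^n c_i^2}\right).
\]
Substituting $c_i = 1/n$ gives $\sum_i c_i^2 = 1/n$, so the right-hand side becomes $\exp(-2n\delta^2)$, which is precisely the claimed bound.

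There is no real obstacle here beyond verifying the bounded differences constant cleanly; the only subtle point is the use of the reverse triangle inequality to convert the pointwise $1/n$ bound on $|f_n - f_n'|$ into a $1/n$ bound on the supremum functional $X$, and the fact that this bound holds uniformly in the sample so that McDiarmid applies without any measurability or integrability concern (as $X \in [0,1]$ always).
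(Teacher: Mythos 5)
Your proof is correct. The paper does not prove this statement itself (it is quoted from Devroye and Lugosi), and your bounded-differences/McDiarmid argument is exactly the standard derivation of that cited result: the reverse-triangle-inequality step giving $c_i = 1/n$ and the substitution $\sum_i c_i^2 = 1/n$ yielding the exponent $-2n\delta^2$ both check out, with the only (routinely ignored) caveat being measurability of the supremum over an uncountable family $\mathcal{A}$, which the source glosses over as well.
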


\noindent {\bf Approximating Convex Sets by Polytopes.}
We make use of the following quantitative bounds of~\cite{GMR95} 
that provide volume approximation for any convex body 
by an inscribed and a circumscribed convex polytope respectively 
with a bounded number of facets:

\begin{theorem}\label{thm:inside_poly}\label{thm:outside_poly}
For any convex body $K \subseteq \mathbb{R}^d$, and $n$ sufficiently large, 
there exists a convex polytope $P \subseteq K$ with at most $\ell$ facets such that $\vol(K \setminus P) \leq \frac{\kappa d}{\ell^{2/(d-1)}}\vol(K)$, 
where $\kappa > 0$ is a universal constant.
Similarly, there exists a convex polytope $P'$ where $K \subseteq P'$ 
with at most $\ell$ facets such that $\vol(P' \setminus K) \leq \frac{\kappa d}{\ell^{2/(d-1)}}\vol(K)$.
\end{theorem}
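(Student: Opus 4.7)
The plan is to construct both $P$ and $P'$ explicitly from $\ell$ carefully chosen boundary points of $K$, after first reducing to the case where $\partial K$ is $C^2$-smooth and strictly convex by a standard mollification (for instance, replacing $K$ by a smoothed version of $K + \delta B$ and letting $\delta \to 0$). For fixed $\ell$, the ratio $\vol(K\setminus P)/\vol(K)$ varies continuously under this deformation, so proving the bound in the smooth regime suffices to obtain the general statement.

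For the circumscribed polytope $P' \supseteq K$, I would pick $\ell$ boundary points $x_1,\ldots,x_\ell \in \partial K$ distributed according to the affine-surface-area density proportional to $\kappa(x)^{1/(d+1)}\,d\sigma(x)$, where $\kappa$ denotes the Gauss--Kronecker curvature and $\sigma$ is the surface measure on $\partial K$. Let $H_i$ be the supporting hyperplane of $K$ at $x_i$, and set $P' = \bigcap_{i=1}^\ell H_i^-$, with $H_i^-$ the closed half-space containing $K$. Then $K\subseteq P'$ and $P'$ has at most $\ell$ facets by construction. To bound $\vol(P'\setminus K)$, I would partition this set into at most $\ell$ cells, one associated with each $x_i$, and on each cell approximate $\partial K$ by its osculating paraboloid at $x_i$; a cell of in-surface radius $r_i$ contributes volume $\asymp r_i^{d+1}\sqrt{\kappa(x_i)}$, and since the pieces tile $\partial K$ one has $\sum_i r_i^{d-1} = O(1)$ times the surface area of $K$. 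A standard H\"older / Lagrange optimization over the sampling density then delivers $\vol(P'\setminus K) = O(d)\cdot \ell^{-2/(d-1)}\vol(K)$, with the factor of $d$ emerging from tracking the dimension-dependent constants in the paraboloid approximation.

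For the inscribed polytope $P \subseteq K$, the construction is more delicate, and this is where the main obstacle lies: tangent hyperplanes are automatically feasible for the circumscribed problem, but the facet hyperplanes of an inscribed $P$ must cut into the interior of $K$ at a precisely calibrated depth, since too shallow a cut leaves $P\not\subseteq K$ while too deep a cut causes the volume loss to blow up. One workable route is to start from the circumscribed polytope $P'$ constructed above and take $P = z + (1-t)(P'-z)$ for the centroid $z$ of $K$ and a suitable $t = \Theta(\ell^{-2/(d-1)})$, exploiting the John-ellipsoid sandwiching of $K$ (available after an affine normalization, which leaves the volume ratio invariant) to guarantee $P\subseteq K$ and to control the loss $1-(1-t)^d = O(dt)$. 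An alternative is to apply the circumscribed construction to the polar body $K^\circ$ and convert back under polarity, taking care that the interchange of vertex and facet counts is compensated by an additional combinatorial step. In either approach, the technical heart of the proof is balancing the shift depth against the $\ell^{-2/(d-1)}$ target while preserving the affine-surface-area weighting of the sample points, and cleanly tracking universal constants through the H\"older estimate to recover the precise dependence on $d$ stated in the theorem.
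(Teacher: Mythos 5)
First, a point of reference: the paper does not prove this statement at all --- Theorem~\ref{thm:inside_poly} is quoted verbatim from~\cite{GMR95} as a black-box tool from convex geometry. So there is no in-paper proof to compare against; your proposal is an attempt to reprove a known (and nontrivial) result, and it should be judged on its own. On that standard, the circumscribed half is a reasonable sketch of the classical affine-surface-area approach, but two steps are genuinely incomplete. The most serious is the inscribed case. Shrinking $P'$ toward the centroid, $P = z + (1-t)(P'-z)$, is only guaranteed to land inside $K$ if you have a \emph{pointwise} containment $P' \subseteq z + (1+t)(K-z)$, and this does not follow from the volume bound $\vol(P'\setminus K) \leq \epsilon\,\vol(K)$: an intersection of supporting half-spaces can have long thin spikes of negligible volume (two supporting lines of the unit disk at nearly antipodal points meet arbitrarily far away). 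What you actually need is a Hausdorff-type estimate coming from the sample points forming a fine net on $\partial K$, which is a stronger statement than the volume bound you derived and has to be proved separately. The polar-duality alternative does not rescue this: polarizing the circumscribed construction for $K^\circ$ yields an inscribed polytope with at most $\ell$ \emph{vertices}, and a $d$-polytope with $\ell$ vertices can have $\Theta(\ell^{\lfloor d/2\rfloor})$ facets, so the ``additional combinatorial step'' you defer is where the facet-count version of the theorem actually lives.

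Two further issues. The mollification reduction requires that your bound for the smoothed bodies hold with constants \emph{uniform} in the smoothing parameter; if any step (e.g., the osculating-paraboloid comparison) uses curvature upper or lower bounds that degenerate as the smoothing is removed, the limit yields nothing. And the local computation is off in general dimension: over an isotropic cell of in-surface radius $r$ with Gauss curvature $\kappa$ the cap between $\partial K$ and the tangent plane has volume $\asymp r^{d+1}\kappa^{1/(d-1)}$, not $r^{d+1}\sqrt{\kappa}$ (your formula is the $d=3$ case), so the H\"older optimization and the resulting exponent of the affine surface area need to be redone. Finally, the linear-in-$d$ constant $\kappa d$ is the actual technical contribution of~\cite{GMR95} and is not obtained by ``tracking constants'' through a paraboloid argument --- naive versions give constants exponential in $d$. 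For the purposes of this paper that would be harmless (the constant is absorbed into $\Theta_d(\cdot)$ in the definition of $N_1$), but the inscribed-case gap is not: as written, $P \subseteq K$ is unjustified.
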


\section{Main Result: Proof of Theorem~\ref{thm:main}} \label{sec:main-section}

The following theorem is a more detailed version of Theorem~\ref{thm:main} and is the main result of this paper:

\begin{theorem}\label{thm:h-main-result}
Fix $d \in \Z_+$ and \nnew{$\eps, \tau \in (0,1)$}. 
Let $n = \nnew{\Omega} \left( (d^2/\eps) \ln^3(d/(\eps\tau))  \right)^{(d+3)/2}$. 
For any $f_0 \in  \mathcal{F}_d$, with probability at least $1-\tau$ over the $n$ samples from $f_0$, we have that $h^2(\mle, f_0) \leq \eps$.
\end{theorem}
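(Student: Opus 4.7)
The plan is to combine the structural uniform convergence estimate for convex sets (Lemma~\ref{lem:alt_convex_set_prob}) with the defining property of the MLE to control the Kullback--Leibler divergence $\KL(f_0 \| \mle)$, and then invoke the standard inequality $h^2(f_0, \mle) \leq \KL(f_0 \| \mle)$ to obtain the claimed Hellinger bound.

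First I would instantiate Lemma~\ref{lem:alt_convex_set_prob} at the given sample size $n$ so that, with probability at least $1-\tau$, the event $\mathcal{E}$ holds that every convex $C \subseteq \R^d$ satisfies $|f_n(C) - f_0(C)| \leq \delta$ for some suitably small $\delta$. Because the superlevel sets $L_f(y)$ of any log-concave $f$ are convex, $\mathcal{E}$ gives uniform convergence of $f_n(L_f(y))$ to $f_0(L_f(y))$ across all levels $y$ \emph{and} all $f \in \mathcal{F}_d$ simultaneously, and in particular for the (random) $\mle$. I would then fix a floor $p_{\min}$ (polynomially small in $M_{f_0}$ and $\eps$) and rewrite both $\E_{f_0}[\log \max(f, p_{\min})]$ and $\tfrac{1}{n}\sum_i \log \max(f(X_i), p_{\min})$ via the layer-cake formula as integrals of $f_0(L_f(e^t))$ and $f_n(L_f(e^t))$ respectively, over $t \in [\log p_{\min}, \log M_f]$. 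The uniform convex-set estimate, multiplied by the length of this interval, is precisely the content of Lemma~\ref{lem:technical_f}: for every log-concave $f$ with bounded log-range, the empirical log-likelihood and $\E_{f_0}[\log \max(f, p_{\min})]$ differ by at most $O(\eps)$.

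The central technical step is to control the range of $\log \mle$, for which I would argue as in the technical overview. If $M_{\mle}$ were substantially larger than $M_{f_0}$, then log-concavity would force $\mle$ to place almost all of its probability mass on some convex set $A$ of very small Lebesgue volume. On $\mathcal{E}$, Lemma~\ref{lem:alt_convex_set_prob} gives $f_n(A) \approx f_0(A) \leq M_{f_0} \cdot \vol(A)$, which is tiny, so most sample points fall \emph{outside} $A$ where $\mle$ is minuscule; this would make $\tfrac{1}{n}\sum_i \log \mle(X_i) < \tfrac{1}{n}\sum_i \log f_0(X_i)$, contradicting the MLE optimality. This yields a polynomial bound $M_{\mle} = O(\poly(M_{f_0}, 1/\eps))$, and hence a controlled upper limit of integration in the layer-cake representation. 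An analogous argument shows that the contribution of the region $\{\mle < p_{\min}\}$ can be made $O(\eps)$ by choosing $p_{\min}$ sufficiently small, so that truncation by $p_{\min}$ distorts $\E_{f_0}[\log \mle]$ by at most $O(\eps)$.

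Putting the pieces together, combining the MLE defining inequality
\[
  \tfrac{1}{n}\sum_{i=1}^n \log \mle(X_i) \; \geq \; \tfrac{1}{n}\sum_{i=1}^n \log f_0(X_i)
\]
with the two applications of Lemma~\ref{lem:technical_f} (to $f = \mle$ and $f = f_0$) and with the $p_{\min}$-truncation estimate yields $\E_{f_0}[\log f_0] - \E_{f_0}[\log \mle] = \KL(f_0 \| \mle) \leq O(\eps)$, from which $h^2(\mle, f_0) \leq \eps$ follows. I expect the hardest step to be the range bound on $\log \mle$, since it must be derived \emph{in tandem} with the convex-set estimate of Lemma~\ref{lem:alt_convex_set_prob} (applied to the random superlevel sets of $\mle$ itself), and because the choice of $\delta$, $p_{\min}$, and $M_{\mle}$ must be carefully calibrated so that the product of the convex-set error and the range of $\log \mle$ (which is logarithmic in these parameters) totals $O(\eps)$; this calibration is precisely what produces the stated exponent $(d+3)/2$ in the sample complexity.
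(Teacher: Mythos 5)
Your architecture matches the paper's almost step for step: uniform convergence over convex sets (Lemma~\ref{lem:alt_convex_set_prob}), transfer to superlevel sets, the truncated layer-cake comparison of Lemma~\ref{lem:technical_f}, the bound on $M_{\mle}$ via the small-volume-set argument (Lemma~\ref{lem:mle_mf}), and the combination with the MLE's defining inequality. (Using Lemma~\ref{lem:technical_f} on $f_0$ itself in place of the paper's Hoeffding step, Lemma~\ref{lem:f_0_bound}, is a legitimate minor variant.)

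However, your concluding step has a genuine gap: you cannot write $\E_{f_0}[\log f_0]-\E_{f_0}[\log\mle]=\KL(f_0\,\|\,\mle)\leq O(\eps)$, because this quantity is $+\infty$ with probability one. By Lemma~\ref{lem:mle_support} the MLE vanishes outside the convex hull of the $n$ samples, a compact set, while a generic $f_0\in\mathcal{F}_d$ (e.g.\ a Gaussian) assigns positive mass to its complement, so $\E_{f_0}[\log\mle]=-\infty$. The truncation $\max(\mle,\pmin)$ does not "distort $\E_{f_0}[\log\mle]$ by $O(\eps)$" --- it replaces an infinite quantity by a finite one --- and, more importantly, $\max(\mle,\pmin)$ is not a probability density, so a small value of $\E_{f_0}[\log f_0]-\E_{f_0}[\log\max(\mle,\pmin)]$ is not a KL divergence and does not directly control a Hellinger distance. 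The paper's resolution is to introduce the renormalized truncation $g=\alpha\,\tilde f\,\mathbf{1}_S$ (Lemma~\ref{lem:tv}), verify $\alpha\approx 1$ and $\TV(g,\mle)\leq 3\eps/64$ using the bound $\pmin\cdot\vol(S)\leq\eps/32$, deduce $\KL(f_0\,\|\,g)\leq 5\eps/16$ from the log-likelihood chain, and only then conclude via the triangle inequality $h(f_0,\mle)\leq h(f_0,g)+h(g,\mle)\leq\KL(f_0\,\|\,g)^{1/2}+\TV(g,\mle)^{1/2}\leq\eps^{1/2}$. You need this intermediate density (or an equivalent device) for the final step to go through; everything upstream of it in your proposal is sound.
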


This section is devoted to the proof of Theorem~\ref{thm:h-main-result}, which follows from Lemma \ref{lem:mle_dist_N_1}.
We will require a sequence of intermediate lemmas and claims. 

\new{We summarize the notation that will appear throughout this proof. 
We use $f_0 \in \mathcal{F}_d$ to denote the target log-concave density.
We denote by $f_n$ the empirical distribution obtained after drawing $n$ iid samples $X_1, \ldots, X_n$ from $f_0$
and by $\mle$ the corresponding MLE. Given $d \in \Z_+$ and $0<\eps, \tau<1$, for concreteness, 
we will denote:
$$N_1 \eqdef \Theta\left( (d^2/\eps) \ln^3(d/(\eps\tau)) \right)^{(d+3)/2} \;,$$ 
for a sufficiently large universal constant in the big-$\Theta$ notation.
We will establish that $N_1$ is an upper bound on the desired sample complexity of the MLE.
Moreover, we will denote
$$z \eqdef \ln (100 n^4 /\tau^2) \;, \delta \eqdef \eps / (32 z) \;,$$ 
$$\pmin \eqdef M_{f_0} e^{-z} \;,$$
and 
$$S \eqdef L_{f_0}(\pmin) \;.$$
}

We start by establishing an upper bound on the volume of superlevel sets:

\begin{lemma}[see, e.g.,~\cite{DiakonikolasKS17-lc}, p.~8]\label{lem:DKS1}
Let $f \in \mathcal{F}_d$ with maximum value $M_f$. Then for all $w \geq 1$, we have
$
\vol(L_{f}(M_{f} e^{-w})) \leq \nnew{{w}^d / M_{f}},
$
and
$
\Pr_{X\sim f}[f(X) \leq M_{f} e^{-w}] \leq O(d)^d e^{-{w}/2}.
$
\end{lemma}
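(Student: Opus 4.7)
The plan is to prove the two inequalities in sequence, with the first (volume) bound feeding directly into the second (tail probability) bound. Throughout, I would fix a point $x^* \in \R^d$ at which $f$ attains its maximum value $M_f$ (this exists because $f$ is upper semi-continuous, integrable, and therefore vanishes at infinity, so the supremum is attained on a compact set) and abbreviate $K_w \eqdef L_f(M_f e^{-w})$. The entire argument is a standard application of the log-concavity of $f$ together with $\int f = 1$.

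For the volume bound, the key step is a contraction argument: $K_w$, scaled down by a factor of $1/w$ about $x^*$, is contained in $K_1$. Indeed, for any $x \in K_w$, setting $y = x^* + (x - x^*)/w$, log-concavity of $f$ gives
\[
\log f(y) \;\geq\; \left(1 - \tfrac{1}{w}\right)\log f(x^*) + \tfrac{1}{w}\log f(x) \;\geq\; \log M_f - 1,
\]
so $y \in K_1$. Hence $\vol(K_w) \leq w^d \vol(K_1)$, and integrability supplies $\vol(K_1) \leq e / M_f$ via $1 = \int f \geq (M_f/e)\cdot\vol(K_1)$. Combining, $\vol(K_w) = O(w^d/M_f)$, which matches the stated bound up to an absolute constant (that can be absorbed into the constant elsewhere in the paper).

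For the probability bound, I would decompose $\{f(X) \leq M_f e^{-w}\}$ into the unit shells $S_k \eqdef \{M_f e^{-(w+k+1)} < f(X) \leq M_f e^{-(w+k)}\}$ for $k \geq 0$. On $S_k$ the density is at most $M_f e^{-(w+k)}$, and $S_k \subseteq K_{w+k+1}$, so by the first part,
\[
\Pr_{X \sim f}[X \in S_k] \;\leq\; M_f e^{-(w+k)} \cdot \vol(K_{w+k+1}) \;\leq\; O(1)\cdot(w+k+1)^d\, e^{-(w+k)}.
\]
To bound the sum over $k$, I split $e^{-(w+k)} = e^{-(w+k)/2}\cdot e^{-(w+k)/2}$ and use $\max_{s \geq 0}(s+1)^d e^{-s/2} = O(d)^d$ (attained near $s = 2d-1$); this gives $(w+k+1)^d e^{-(w+k)} \leq O(d)^d e^{-(w+k)/2}$, so summing yields a geometric series bounded by $O(d)^d e^{-w/2}$, as required.

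No step should present a serious obstacle; the proof is a clean exercise in log-concavity and the bookkeeping of constants is the only thing requiring care. The one subtlety worth flagging is the existence of the maximizer $x^*$; if desired, this can be entirely bypassed by working with a sequence of near-maximizers $x^*_n$ satisfying $f(x^*_n) \to M_f$ and passing to the limit in the contraction inequality, losing only an arbitrarily small constant.
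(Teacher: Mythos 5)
Your proof is correct and follows essentially the same route as the paper's: the volume bound comes from the identical contraction argument (scaling $L_f(M_fe^{-w})$ by $1/w$ about the maximizer lands in $L_f(M_f/e)$, whose volume is at most $e/M_f$ by integrability), and the tail bound from a layer-cake decomposition combined with $s^d e^{-s}\leq O(d)^d e^{-s/2}$. The only cosmetic difference is that the paper integrates $\int_0^{M_fe^{-w}}\vol(L_f(y))\,dy$ continuously where you sum over discrete unit shells, and the constant-$e$ slack you flag in the volume bound is present (and silently absorbed) in the paper as well.
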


We defer this proof to Appendix \ref{sec:main-appendix}.
We use Lemma~\ref{lem:DKS1} to get a bound on the volume of the superlevel set that contains all the samples with high probability:

\begin{corollary}\label{lem:S_def}
For $n \geq N_1$, we have that:
\begin{itemize}
\item[(a)] $\vol(S) \leq \nnew{z^d / M_{f_0}}$, and 
\item[(b)] $\Pr_{X \sim f_0}[f_0(X) \leq M_{f_0}/(100 n^4  /\tau^2)] \leq \tau/\nnew{(10n)}$.
In particular, with probability at least $1-\nnew{\tau/10}$, all samples $X_1, \ldots, X_{n}$ from $f_0$ are in $S$.
\end{itemize}
\end{corollary}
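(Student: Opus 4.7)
The plan is to derive both parts of the corollary as essentially a direct instantiation of Lemma~\ref{lem:DKS1}, applied with the specific choice $w = z = \ln(100 n^4/\tau^2)$ to the target density $f_0$ (which is log-concave, hence in the scope of that lemma). The main bookkeeping is to verify that the sample-complexity lower bound $n \geq N_1$ makes all the quantitative hypotheses hold, in particular that $z \geq 1$ and that the factor $O(d)^d$ appearing in Lemma~\ref{lem:DKS1} gets absorbed.

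For part (a), I would observe that $S = L_{f_0}(\pmin) = L_{f_0}(M_{f_0} e^{-z})$ by the definitions of $S$ and $\pmin$. Since $n \geq N_1$ is large, we have $z = \ln(100 n^4/\tau^2) \geq 1$, so Lemma~\ref{lem:DKS1} applies and yields
\[
\vol(S) = \vol\bigl(L_{f_0}(M_{f_0} e^{-z})\bigr) \leq z^d / M_{f_0},
\]
which is exactly part (a).

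For part (b), I would again apply the second conclusion of Lemma~\ref{lem:DKS1} with $w = z$, giving
\[
\Pr_{X\sim f_0}[f_0(X) \leq M_{f_0} e^{-z}] \leq O(d)^d\, e^{-z/2}.
\]
Since $e^{-z/2} = \tau/(10 n^2)$, it suffices to show $O(d)^d \leq n$; this is where I would plug in the assumption $n \geq N_1 = \Theta((d^2/\eps)\ln^3(d/(\eps\tau)))^{(d+3)/2}$, which grows at least like $d^{d+3}$, comfortably dominating $O(d)^d$ for all sufficiently large universal constant in the $\Theta$. This gives the first bullet of (b). The ``in particular'' clause then follows by a union bound over the $n$ samples: the event that some $X_i$ has $f_0(X_i) < \pmin$ (equivalently $X_i \notin S$) has probability at most $n \cdot \tau/(10 n) = \tau/10$.

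The only potential obstacle is checking the constants carefully — specifically confirming that the choice of $N_1$ is large enough to (i) force $z \geq 1$ and (ii) swallow the $O(d)^d$ factor — but this is a straightforward verification given the generous form of $N_1$; there is no conceptual difficulty beyond quoting Lemma~\ref{lem:DKS1}.
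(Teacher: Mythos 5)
Your proposal is correct and matches the paper's own proof: both parts are obtained by instantiating Lemma~\ref{lem:DKS1} with $w=z$, absorbing the $O(d)^d$ factor using the size of $N_1$, and finishing part (b) with a union bound over the $n$ samples. Your verification that $e^{-z/2}=\tau/(10n^2)$ and that $N_1 \gtrsim d^{d+3}$ dominates $O(d)^d$ is exactly the constant-checking the paper leaves implicit with the phrase ``sufficiently large constant in the definition of $N_1$.''
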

\begin{proof}
From Lemma \ref{lem:DKS1}, we have that
$
\vol(S) = \vol(L_{f_0}(M_{f_0} e^{-z})) \leq O(z^d / M_{f_0}).
$
Also from Lemma \ref{lem:DKS1}, we have that
$
\Pr_{X \sim f_0}[f_0(X) \leq M_{f_0} / (100 n^4 /\tau^2)]  \leq \tau/(10n),
$
if we assume a sufficiently large constant is selected in the definition of $N_1$.
Taking a union bound over all samples, we get that with probability at least $1-\tau/10$, 
all of the $n$ samples are in $S$, as required.
\end{proof}


We can now state our main lemma establishing an upper bound
on the error of approximating the probability of every convex set:

\begin{lemma}\label{lem:alt_convex_set_prob}
For $n \geq N_1$, we have that 
with probability at least $1 - \tau/3$ over the choice of $X_1,\ldots,X_n$ drawn from $f_0$, 
for any convex set $C \subseteq \R^d$ it holds that
$\left|f_0(C) - f_n(C) \right| \leq \delta.$
\end{lemma}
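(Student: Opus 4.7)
The plan is to bypass the infinite VC dimension of the class of convex sets by (i) localizing to the superlevel set $S=L_{f_0}(\pmin)$, where with high probability every sample lies and whose volume is tightly controlled, and (ii) sandwiching each convex $C\cap S$ between an inner polytope $P$ and an outer polytope $P'$ with only $\ell$ facets, since polytopes with $\ell$ facets form a class of finite VC dimension $O(d\ell\log\ell)$. Log-concavity enters essentially in the sandwiching step via the bound $\vol(S)\le z^d/M_{f_0}$ from Lemma~\ref{lem:DKS1}; combined with the uniform bound $f_0(A)\le M_{f_0}\vol(A)$, this converts the volume estimates of Theorem~\ref{thm:inside_poly} into usable bounds on $f_0$-mass.

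First I would dispose of the region outside $S$. By Corollary~\ref{lem:S_def}(b), with probability at least $1-\tau/10$ every sample lies in $S$, so $f_n(C)=f_n(C\cap S)$; and Lemma~\ref{lem:DKS1} gives $f_0(\R^d\setminus S)\le O(d)^d e^{-z/2}$, which for $z=\ln(100n^4/\tau^2)$ and $n\ge N_1$ is much less than $\delta$. Hence it suffices to approximate $f_0(C\cap S)$ by $f_n(C\cap S)$ uniformly in convex $C$, up to an error of order $\delta$.

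For such a $C$, set $K=C\cap S$, a convex body with $\vol(K)\le\vol(S)\le z^d/M_{f_0}$ by Corollary~\ref{lem:S_def}(a). Theorem~\ref{thm:inside_poly} gives polytopes $P\subseteq K\subseteq P'$, each with at most $\ell$ facets, such that
\[
\vol(P'\setminus P)=\vol(P'\setminus K)+\vol(K\setminus P)\le\frac{2\kappa d}{\ell^{2/(d-1)}}\vol(K)\le\frac{2\kappa dz^d}{\ell^{2/(d-1)}M_{f_0}},
\]
so $f_0(P'\setminus P)\le 2\kappa dz^d/\ell^{2/(d-1)}\le\delta/4$ as soon as $\ell\gtrsim(dz^d/\delta)^{(d-1)/2}$. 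Let $\mathcal{P}_\ell$ be the class of convex polytopes in $\R^d$ with at most $\ell$ facets; since each is an intersection of $\ell$ half-spaces and half-spaces in $\R^d$ have VC dimension $d+1$, a standard intersection bound yields $\mathrm{VCdim}(\mathcal{P}_\ell)=O(d\ell\log\ell)$. Applying Theorem~\ref{thm:vc} together with Theorem~\ref{thm:A-expect-bound} at failure probability $\tau/4$ then produces, for $n\gtrsim d\ell(\log\ell)/\delta^2+\log(1/\tau)/\delta^2$, the uniform bound
\[
\sup_{Q\in\mathcal{P}_\ell}|f_0(Q)-f_n(Q)|\le\delta/4
\]
with probability at least $1-\tau/4$.

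On the intersection of the two good events,
\[
f_n(P)\le f_n(C)=f_n(C\cap S)\le f_n(P'),\qquad f_0(P)\le f_0(C\cap S)\le f_0(C)\le f_0(P')+f_0(\R^d\setminus S),
\]
and a short interval chase gives $|f_0(C)-f_n(C)|\le f_0(P'\setminus P)+\delta/2+f_0(\R^d\setminus S)\le\delta$. Substituting $\ell\asymp(dz^d/\delta)^{(d-1)/2}$ into the sample-size condition yields $n\gtrsim d^{(d+3)/2}z^{d(d-1)/2}\delta^{-(d+3)/2}\log(dz/\delta)$, which after $\delta=\eps/(32z)$ and $z=\Theta(\log(n/\tau))$ is absorbed into $N_1$. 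The main obstacle is the polytope-approximation step: one must keep $\ell$ small enough that $d\ell\log\ell/\delta^2\lesssim N_1$ while still driving $f_0(P'\setminus P)$ below $\delta$. This balance is made possible precisely by log-concavity, since without a bound tying $\vol(S)$ to $M_{f_0}$ no finite $\ell$ would suffice, as the uniform distribution on the unit circle from the technical overview illustrates.
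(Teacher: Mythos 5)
Your construction is sound as far as it goes, but it is essentially the paper's warmup argument (Lemma~\ref{lem:convex_set_prob}), and it does not prove the lemma as stated with the sample size $N_1$. The problem is quantitative but real. Because you approximate the single body $K = C\cap S$ and bound its $f_0$-mass by $M_{f_0}\vol(\cdot)$, you are forced to take $\ell \asymp (d z^d/\delta)^{(d-1)/2}$ facets, so the VC dimension, and hence $n \gtrsim d\ell\log\ell/\delta^2$, carries a factor $z^{d(d-1)/2}$. After substituting $\delta = \eps/(32z)$ the total power of $z=\Theta(\ln(n/\tau))$ in your requirement is $(d^2+3)/2$, whereas $N_1 = \Theta\left((d^2/\eps)\ln^3(d/(\eps\tau))\right)^{(d+3)/2}$ supplies only $\ln^{3(d+3)/2}$, i.e.\ $\ln^{O(d)}$. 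For $d\geq 5$ your required polylog power strictly exceeds what $N_1$ provides, so the claim that the bound ``is absorbed into $N_1$'' fails; what you actually obtain is the weaker guarantee of the form $N_2$ that the paper states for the warmup.

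The missing idea is a multi-scale decomposition. The paper sets $S_i = L_{f_0}(M_{f_0}e^{-i})$ for $i\in[L]$ with $L = O(z)$, approximates each $C_i = C\cap S_i$ separately by polytopes $\pini\subseteq C_i\subseteq \pouti$ with only $H=(10\kappa d/\delta)^{(d-1)/2}$ facets (no $z^d$), and assembles $\cin = \bigcup_i \pini$ and a corresponding $\cout$. The point is that the volume error at level $i$ lives in $S_i\setminus S_{i-1}$, where $f_0 \leq M_{f_0}e^{-(i-1)}$, so one only needs a \emph{relative} volume error $\delta\cdot\vol(S_i)/10$ per level; summing gives
\[
\sum_{i\in[L]} \frac{\delta}{10}\,\vol(S_i)\, M_{f_0}e^{-(i-1)} \;\leq\; \frac{e\delta}{10}\int_0^{M_{f_0}}\vol(L_{f_0}(y))\,dy \;=\;\frac{e\delta}{10},
\]
with no $z^d$ loss. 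The resulting class (Boolean combinations of $2L$ polytopes with $H$ facets each, Lemma~\ref{lem:vc_sets}) has VC dimension $\tilde O(dLH)$, which costs only one extra factor of $z$ rather than $z^{d(d-1)/2}$, and this is exactly what brings the bound down to $N_1$. If you only need a $\tilde O_d(\eps^{-(d+3)/2})$ statement with a $d$-dependent polylog, your argument suffices; to get the lemma with $N_1$ as defined, you need the level-set refinement.
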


The proof of Lemma~\ref{lem:alt_convex_set_prob} is deferred to Section~\ref{sec:full-proof}. 
In Section~\ref{sec:warmup}, we establish a weaker version of this lemma 
that requires more samples but has a simpler proof.
Combining Lemma \ref{lem:alt_convex_set_prob} with the observation 
that for any log-concave density $f$ and $t > 0$ we have that $L_f(t)$ is convex, 
we obtain the following corollary:

\begin{corollary}\label{cor:imp_convex_set_prob}
Let $n \geq N_1$.
\new{Conditioning on the event of Lemma~\ref{lem:alt_convex_set_prob},}
we have that for any $f \in \mathcal{F}_d$ and for any $t \geq 0$ 
it holds
$
\left|{\Pr}_{X\sim f_0}[f(X) \geq t] - {\Pr}_{X\sim f_n}[f(X) \geq t]\right| < \delta.$
\end{corollary}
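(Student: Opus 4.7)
The plan is essentially to reduce the corollary to a direct application of Lemma~\ref{lem:alt_convex_set_prob}, using only the shape-constraint that the superlevel sets of a log-concave density are convex.

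First I would observe that for any $f \in \mathcal{F}_d$ and any $t \geq 0$, the set $\{x \in \R^d : f(x) \geq t\}$ is exactly the superlevel set $L_f(t)$ appearing in the notation of Section~\ref{sec:prelims}. By definition, $f = e^{\phi}$ for an upper semi-continuous concave function $\phi : \R^d \to [-\infty, \infty)$. For $t > 0$, one has $L_f(t) = \{x : \phi(x) \geq \ln t\}$, which is a superlevel set of a concave function and hence convex (and closed, by upper semi-continuity of $\phi$). For $t = 0$, $L_f(0) = \R^d$ is trivially convex. So in every case $C \eqdef L_f(t)$ is a convex subset of $\R^d$.

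Next I would rewrite the two probabilities in the statement as integrals of the appropriate measures against the indicator of $C$. By definition,
\[
{\Pr}_{X \sim f_0}[f(X) \geq t] = \int_{\R^d} \mathbf{1}_{C}(x)\, f_0(x)\, dx = f_0(C),
\]
and, using the definition of the empirical distribution $f_n(A) = (1/n)\sum_{i=1}^n \mathbf{1}_A(X_i)$ from Section~\ref{sec:prelims},
\[
{\Pr}_{X \sim f_n}[f(X) \geq t] = \frac{1}{n} \sum_{i=1}^n \mathbf{1}_{C}(X_i) = f_n(C).
\]
Conditioning on the high-probability event of Lemma~\ref{lem:alt_convex_set_prob}, which says that $|f_0(C') - f_n(C')| \leq \delta$ simultaneously for \emph{every} convex set $C' \subseteq \R^d$, I would apply the lemma to our particular convex set $C = L_f(t)$ to conclude $|f_0(C) - f_n(C)| \leq \delta$, yielding the desired bound.

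There is essentially no obstacle: the content lies entirely in Lemma~\ref{lem:alt_convex_set_prob}, and the corollary is a clean specialization obtained by recognizing that density-superlevel sets of an arbitrary $f \in \mathcal{F}_d$ (not just of the target $f_0$) are convex, so that uniform convergence over convex sets immediately gives uniform convergence over the one-parameter family of superlevel-set probabilities. The only minor point worth flagging is the strict-versus-non-strict inequality between the lemma ($\leq \delta$) and the corollary statement ($< \delta$); this is immaterial and can either be absorbed into $\delta$ or handled by noting that the bound in Lemma~\ref{lem:alt_convex_set_prob} can be tightened by an arbitrarily small slack when choosing the constants defining $N_1$.
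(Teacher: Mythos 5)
Your proof is correct and matches the paper's argument exactly: the paper likewise obtains the corollary by noting that the superlevel sets $L_f(t)$ of a log-concave $f$ are convex and then invoking Lemma~\ref{lem:alt_convex_set_prob} on those sets. The strict-versus-non-strict inequality you flag is indeed an immaterial inconsistency in the paper's statement (the paper itself later uses the $\leq \delta$ form).
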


\new{
We will require the following technical claim, which follows from standard properties
of Lebesgue integration (see Appendix~\ref{sec:main-appendix}):}
\begin{lemma}\label{lem:aux_exp}
Let $g,h:\mathbb{R}^d\to \mathbb{R}$ be \nnew{probability distributions}, and $\phi:\mathbb{R}\to \mathbb{R}$.
If ${\E}_{Y\sim g}[\phi(Y)]$, ${\E}_{Y\sim h}[\phi(Y)]$ are both finite, then
$
|{\E}_{Y\sim g}[\phi(Y)] - {\E}_{Y\sim h}[\phi(Y)]| \leq \int^{\infty}_{-\infty} | {\Pr}_{Y\sim g}[\phi(Y) < x] - {\Pr}_{Y\sim h}[\phi(Y) < x] | dx
$.
\end{lemma}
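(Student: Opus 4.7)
The plan is to derive an exact integral representation of $\E_g[\phi(Y)] - \E_h[\phi(Y)]$ via a layer-cake (horizontal slicing) argument, and then pull the absolute value inside the integral via the triangle inequality.

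First, I would recall the standard identity that for any real-valued random variable $X$ with $\E[|X|] < \infty$,
\[
\E[X] \;=\; \int_0^\infty \Pr[X > t]\,dt \;-\; \int_{-\infty}^0 \Pr[X < t]\,dt,
\]
which follows from Fubini's theorem applied to the decomposition $X^+ = \int_0^\infty \mathbf{1}\{X>t\}\,dt$ and $X^- = \int_{-\infty}^0 \mathbf{1}\{X<t\}\,dt$. The hypothesis that both $\E_g[\phi(Y)]$ and $\E_h[\phi(Y)]$ are finite ensures that every integral appearing below converges absolutely.

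Next, I would apply this identity to the random variable $\phi(Y)$ under each of the two laws $g$ and $h$ and subtract. On the positive axis one writes $\Pr[\phi(Y) > t] = 1 - \Pr[\phi(Y) \leq t]$, so the constants $1$ cancel between the $g$ and $h$ terms; the two resulting pieces then join into a single integral over all of $\mathbb{R}$, yielding
\[
\E_g[\phi(Y)] - \E_h[\phi(Y)] \;=\; -\int_{-\infty}^{\infty} \bigl(\Pr_g[\phi(Y) \leq t] - \Pr_h[\phi(Y) \leq t]\bigr)\,dt.
\]
Since $\Pr[\phi(Y) \leq t]$ and $\Pr[\phi(Y) < t]$ differ only on the at-most countable set of atoms of the law of $\phi(Y)$, which has Lebesgue measure zero in $t$, I may freely replace $\leq$ by $<$ in the integrand without altering the value of the integral.

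Finally, applying the triangle inequality $\bigl|\int F\bigr| \leq \int |F|$ to the displayed identity yields exactly the desired bound
\[
\bigl|\E_g[\phi(Y)] - \E_h[\phi(Y)]\bigr| \;\leq\; \int_{-\infty}^{\infty} \bigl|\Pr_g[\phi(Y) < t] - \Pr_h[\phi(Y) < t]\bigr|\,dt.
\]
I do not anticipate a real obstacle here: the argument amounts to the layer-cake formula plus the triangle inequality. The only genuine bookkeeping is tracking strict versus non-strict inequalities and handling the positive and negative parts of $\phi(Y)$ separately, both of which are harmless under the finite-expectation hypothesis.
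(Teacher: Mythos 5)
Your proposal is correct and follows essentially the same route as the paper's proof: a layer-cake decomposition of $\phi(Y)$ into positive and negative parts, cancellation of the constant $1$ on the positive axis, and the triangle inequality. Your explicit remark that $\Pr[\phi(Y)\leq t]$ and $\Pr[\phi(Y)<t]$ agree for Lebesgue-a.e.\ $t$ is a small point the paper glosses over, but the argument is otherwise the same.
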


Our next lemma establishes a useful upper 
bound on the empirical error of the truncated likelihood of any log-concave density:
\begin{lemma}\label{lem:technical_f}
Let $n \geq N_1$ and $f \in \mathcal{F}_d$ with maximum value $M_f$. 
For all $\rho \in (0,M_f]$, \new{conditioning on the event of Corollary~\ref{cor:imp_convex_set_prob},} 
we have \[
|{\E}_{X\sim f_0}[\ln(\max(f(X), \rho))] - {\E}_{X\sim f_n}[\ln(\max(f(X), \rho))]| \leq \new{\delta} \cdot \ln (M_{f}/\rho) \;.
\]
\end{lemma}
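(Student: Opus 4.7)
The plan is to reduce the claim to the integral identity of Lemma~\ref{lem:aux_exp} and then apply Corollary~\ref{cor:imp_convex_set_prob} uniformly under the integrand. Specifically, I will invoke Lemma~\ref{lem:aux_exp} with $\phi(y) \eqdef \ln(\max(f(y), \rho))$, $g = f_0$, and $h = f_n$. The key observation justifying applicability is that $\phi$ is pointwise bounded: since $\rho \leq M_f$ and $f(y) \leq M_f$ everywhere, one has $\ln \rho \leq \phi(y) \leq \ln M_f$, so both expectations under consideration are finite.

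With Lemma~\ref{lem:aux_exp} in hand, the difference of expectations is controlled by
\[
\int_{-\infty}^{\infty} \bigl|\Pr_{Y \sim f_0}[\phi(Y) < x] - \Pr_{Y \sim f_n}[\phi(Y) < x]\bigr|\,dx.
\]
The pointwise bounds on $\phi$ localize the integrand to $x \in (\ln \rho, \ln M_f]$: outside this interval the event $\{\phi(Y) < x\}$ is either impossible or certain under both distributions, so the integrand vanishes. For $x$ in this interval, I set $t \eqdef e^x$; since $t > \rho$, the constraint $\max(f(Y), \rho) < t$ is equivalent to $f(Y) < t$, and the integrand rewrites as $|\Pr_{Y \sim f_0}[f(Y) \geq t] - \Pr_{Y \sim f_n}[f(Y) \geq t]|$.

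At this point Corollary~\ref{cor:imp_convex_set_prob}, which is available under the conditioning assumed in the statement, bounds the rewritten integrand uniformly by $\delta$; it applies to any $f \in \mathcal{F}_d$ since the superlevel sets of a log-concave density are convex. Integrating this uniform bound over an interval of length $\ln M_f - \ln \rho = \ln(M_f / \rho)$ yields the claimed estimate. There is no serious obstacle here: the only care needed is in the pointwise equivalence $\{\max(f(Y), \rho) < t\} = \{f(Y) < t\}$ for $t > \rho$ and in the boundedness of $\phi$ that allows Lemma~\ref{lem:aux_exp} to apply; the substantive work is already packaged in Corollary~\ref{cor:imp_convex_set_prob}.
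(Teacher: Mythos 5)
Your proposal is correct and follows essentially the same route as the paper's proof: apply Lemma~\ref{lem:aux_exp} to $\phi = \ln(\max(f,\rho))$, localize the resulting integral to $[\ln\rho, \ln M_f]$, rewrite the tail events in terms of superlevel sets of $f$, and invoke Corollary~\ref{cor:imp_convex_set_prob} to bound the integrand uniformly by $\delta$. The only addition is your explicit check that $\phi$ is bounded so the expectations are finite, which the paper leaves implicit.
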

\begin{proof}
Letting $h=f_0$, $g=f_n$, and $\phi(x)=\ln(\max(f(x),\rho))$, by Lemma \ref{lem:aux_exp} we have
\begin{align*}
|{\E}_{X\sim f_0}[&\ln(\max(f(X), \rho))] - {\E}_{X\sim f_n}[\ln(\max(f(X), \rho))]| \\
&\leq \int^{\infty}_{-\infty} \left|{\Pr}_{X\sim f_0}[\ln(\max(f(X), \ln \rho)) < t] - {\Pr}_{X\sim f_n}[\ln(\max(f(X), \rho)) < t] \right| dt \\ 
&= \int^{\ln M_f}_{-\infty} \left| {\Pr}_{X\sim f_0}[\max(\ln f(X), \ln \rho)) < t] - {\Pr}_{X\sim f_n}[\max(\ln f(X), {\ln} \rho)) < t] \right| dt \\ 
&= \int^{\ln M_f}_{\ln \rho} \left|{\Pr}_{X\sim f_0}[{\ln( f(X) )} < t] - {\Pr}_{X\sim f_n}[{\ln( f(X)  )} < t] \right| dt \\
&= \int^{\ln M_f}_{\ln \rho} \left|{\Pr}_{X\sim f_0}[f(X) < e^t] - {\Pr}_{X\sim f_n}[f(X) < e^t] \right| dt \\
&= \int^{\ln M_f}_{\ln \rho} \left|{\Pr}_{X\sim f_0}[f(X) \geq e^t] - {\Pr}_{X\sim f_n}[f(X) \geq e^t] \right| dt.
\end{align*}
Since we {conditioned on the event of Corollary~\ref{cor:imp_convex_set_prob}}, 
we have $\left|{\Pr}_{X\sim f_0}[f(X) \geq t] - {\Pr}_{X\sim f_n}[f(X) \geq t]\right| \leq \delta$ for all $t\geq 0$.
Therefore, we have that
$$\left|{\E}_{X\sim f_0}[\ln(\max(f(X), { \rho } ))] - {\E}_{X\sim f_n}[\ln(\max(f(X), \rho))]\right| 
\leq \int^{\ln M_f}_{\ln \rho} {\delta} dt  = \new{\delta} \cdot (\ln M_f - \ln\rho) \;,$$
which concludes the proof.
\end{proof}

For $f_0$ itself, we can use Hoeffding's inequality to get a bound on the empirical error of its likelihood:

\begin{lemma}\label{lem:f_0_bound}
Let $n \geq N_1$.
\new{Conditioning on the event of Corollary~\ref{lem:S_def}, with probability at least $1-\tau/3$ over $X_1, \ldots, X_n$, 
we have that}
\[
\left|\frac{1}{n} { \sum_{i=1}^n } \ln f_0(X_i) - {\E}_{X\sim f_0}\left[\ln f_0(X)\right]\right| \leq \eps/8 \;.
\]
\end{lemma}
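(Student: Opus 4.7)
The plan is to reduce the lemma to an application of Hoeffding's inequality after truncating the (otherwise unbounded) log-likelihood. Define $\tilde Y_i \eqdef \ln(\max(f_0(X_i),\pmin))$ and $\tilde Y \eqdef \ln(\max(f_0(X),\pmin))$. By construction, $\tilde Y_i \in [\ln \pmin, \ln M_{f_0}]$, so each $\tilde Y_i$ takes values in an interval of length $z = \ln(100n^4/\tau^2)$. On the event of Corollary~\ref{lem:S_def} — i.e., when every sample lies in $S = L_{f_0}(\pmin)$ — we have $f_0(X_i) \geq \pmin$ for all $i$, and hence $\tilde Y_i = \ln f_0(X_i)$; in particular, $\frac{1}{n}\sum_{i=1}^n \ln f_0(X_i) = \frac{1}{n}\sum_{i=1}^n \tilde Y_i$ on that event.

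Next, I would apply Hoeffding's inequality to the iid bounded variables $\tilde Y_1, \ldots, \tilde Y_n$: with probability at least $1-\tau/3$,
\[
\left| \frac{1}{n}\sum_{i=1}^n \tilde Y_i - \E[\tilde Y] \right| \;\leq\; z\sqrt{\frac{\ln(6/\tau)}{2n}} \;\leq\; \eps/16,
\]
where the last inequality uses that $z = O(\ln(n/\tau))$ and that $n \geq N_1$ grows polynomially in $1/\eps$ to a power much larger than $2$, so the right-hand side is dominated by $\eps/16$ with room to spare.

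It remains to bound the truncation bias $|\E[Y] - \E[\tilde Y]|$, where $Y = \ln f_0(X)$. Writing this difference as $\E[(\ln \pmin - \ln f_0(X))\mathbf{1}\{f_0(X)<\pmin\}]$ and using the layer cake formula yields
\[
|\E[Y] - \E[\tilde Y]| \;=\; \int_0^\infty \Pr_{X\sim f_0}\!\left[f_0(X) < M_{f_0} e^{-(z+t)}\right]\, dt \;\leq\; \int_0^\infty O(d)^d\, e^{-(z+t)/2}\, dt \;=\; 2\,O(d)^d\, e^{-z/2},
\]
by Lemma~\ref{lem:DKS1}. Since $e^{-z/2} = \tau/(10 n^2)$, this bias is negligible compared to $\eps/16$ for $n \geq N_1$. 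Combining the three estimates via the triangle inequality on the intersection of the Hoeffding event and the event of Corollary~\ref{lem:S_def} gives $\left|\frac{1}{n}\sum_i \ln f_0(X_i) - \E[\ln f_0(X)]\right| \leq \eps/8$, as required. The main subtlety — though not really an obstacle — is to confirm that the tail bound from Lemma~\ref{lem:DKS1} kicks in fast enough to dwarf both the Hoeffding fluctuation and the truncation bias; this is exactly what the choice $z = \ln(100n^4/\tau^2)$ engineers.
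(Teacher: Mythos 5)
Your proposal is correct and follows essentially the same route as the paper: truncate $\ln f_0$ at $\ln\pmin$, note that on the event of Corollary~\ref{lem:S_def} the truncation does not change the empirical average, apply Hoeffding to the bounded variables with range $z$, and control the truncation bias in the expectation via the tail bound of Lemma~\ref{lem:DKS1}, with each piece bounded by $\eps/16$. The only cosmetic difference is that you phrase the bias term via the layer-cake formula where the paper writes it as $\int_{-\infty}^{\ln\rho}\Pr[\ln f_0(X)\leq T]\,dT$; these are the same computation.
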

We defer this proof to Appendix~\ref{sec:main-appendix}.
The following simple lemma shows that the MLE is supported
in the convex hull of the samples:

\begin{lemma}\label{lem:mle_support}
Let $n \geq 1$.
Let $X_1,\ldots,X_n$ be samples drawn from $f_0$, and $C$ be the convex hull of these samples.
Then, for all $x\in \mathbb{R}^d \setminus C$, we have $\mle(x)=0$.
\end{lemma}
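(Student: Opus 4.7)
The plan is to argue by contradiction: suppose there exists $x^* \in \R^d \setminus C$ with $\mle(x^*) > 0$, and construct a log-concave density with strictly larger log-likelihood at the samples, contradicting the MLE property. The key idea is to restrict $\mle$ to $C$ and renormalize. Since each sample $X_i$ lies in $C$, the renormalization scales every $\mle(X_i)$ upward by a factor greater than one, raising the log-likelihood.

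To make this precise, I would write $\mle = e^{\phi}$ with $\phi$ upper semi-continuous and concave. Because $C$ is the convex hull of finitely many points, it is closed, so the open superlevel set $\{\phi > \log(\mle(x^*)/2)\}$ contains an open neighborhood of $x^*$ disjoint from $C$ on which $\mle$ is bounded below by a positive constant. This gives $\alpha \eqdef \mle(C) < 1$. Now define
\[
g(x) \;=\; \begin{cases} \mle(x)/\alpha & \text{if } x \in C, \\ 0 & \text{otherwise,} \end{cases}
\]
which is a probability density by construction. I would verify that $g \in \mathcal{F}_d$ by writing $g = e^{\psi}$ with $\psi = \phi - \log\alpha$ on $C$ and $\psi = -\infty$ on $\R^d \setminus C$: concavity of $\psi$ on $\R^d$ is immediate (trivial whenever any argument is outside $C$, and inherited from $\phi$ otherwise), while upper semi-continuity follows because $\{\psi \geq c\} = C \cap \{\phi \geq c + \log\alpha\}$ is an intersection of two closed sets.

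To finish, observe that the MLE achieves finite log-likelihood (any fixed Gaussian witnesses a finite value over $\mathcal{F}_d$), so $\mle(X_i) > 0$ for every $i$. Combined with $X_i \in C$, this yields
\[
\frac{1}{n}\sum_{i=1}^{n} \log g(X_i) \;=\; \frac{1}{n}\sum_{i=1}^{n} \log \mle(X_i) \;-\; \log \alpha \;>\; \frac{1}{n}\sum_{i=1}^{n} \log \mle(X_i),
\]
where the strict inequality uses $\alpha < 1$. This contradicts the definition of $\mle$ as a maximizer of the log-likelihood over $\mathcal{F}_d$, so no such $x^*$ can exist. The only non-routine step is confirming that $g$ is a bona fide member of $\mathcal{F}_d$, and in particular upper semi-continuity at the boundary of $C$; beyond this the argument is a direct unwinding of the MLE definition.
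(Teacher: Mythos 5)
Your proof is correct and follows essentially the same route as the paper's: restrict $\mle$ to the convex hull $C$, renormalize by the deficient mass $\mle(C)<1$, and observe that the renormalized density is a member of $\mathcal{F}_d$ with strictly larger log-likelihood, contradicting optimality of $\mle$ (you are in fact more careful than the paper in verifying upper semi-continuity of the restriction and in noting $\mle(X_i)>0$). One minor caveat: strict superlevel sets of an upper semi-continuous $\phi$ need not be open, and need not contain a full neighborhood of $x^*$ if $x^*$ lies on the boundary of $\{\phi>-\infty\}$, so the claim $\mle(C)<1$ is better justified by a small cone with apex at $x^*$ pointing into the relative interior of the domain of $\phi$ (where concavity gives a uniform lower bound); the paper's own proof is equally terse on this point.
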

We defer this proof to Appendix~\ref{sec:main-appendix}.
We need to truncate the likelihood at a density small enough to be ignored for $f$ close to $f_0$.
This motivates the following definition:

\begin{definition}\label{def:f_prime}
We define \nnew{$\tilde{f} : \mathbb{R}^d \rightarrow \mathbb{R}$ such that $\tilde{f}(x) \eqdef \max\{\pmin, \mle(x)\}$}.
\end{definition}

We show that this truncation and renormalization does not affect the MLE $\mle$ by much:

\begin{lemma}\label{lem:tv}
Let $n \geq N_1$.
Let $g(x) \eqdef \alpha \nnew{\tilde{f}(x) \mathbf{1}_S(x)}$, $\alpha \in [0,\infty)$, be such that $\int_{S} g(x) dx = 1$.
Conditioning on the event of Corollary~\ref{lem:S_def}, we have the following:
\begin{itemize}
\item[(a)] $1-\eps/32 \leq \alpha \leq 1$, and 
\item[(b)] $\TV(g, \mle) \leq {3\eps/64}$.
\end{itemize}
\end{lemma}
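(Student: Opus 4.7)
\textbf{Proof proposal for Lemma~\ref{lem:tv}.} The plan is to reduce both statements to two elementary facts: that $\mle$ is supported inside $S$ (under the conditioning), and that the ``floor'' $\pmin$ contributes negligibly to the integral over $S$ because $\pmin\cdot\vol(S)$ is tiny. Concretely, conditioning on the event of Corollary~\ref{lem:S_def}, every sample $X_i$ lies in $S$, so the convex hull of the samples is contained in $S$, and Lemma~\ref{lem:mle_support} then gives $\mle(x)=0$ for $x\notin S$. In particular, $\int_S \mle(x)\,dx=\int_{\R^d}\mle(x)\,dx=1$.

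For part (a), I would first bound $\int_S \tilde f(x)\,dx$ from both sides. Since $\tilde f\geq \mle$ pointwise, we get $\int_S \tilde f\geq 1$, hence $\alpha=1/\int_S \tilde f \leq 1$. On the other side, $\tilde f(x)\leq \mle(x)+\pmin$ on $S$, so
\[
\int_S \tilde f(x)\,dx \;\leq\; 1+\pmin\cdot\vol(S).
\]
Combining $\pmin=M_{f_0}e^{-z}$ with the bound $\vol(S)\leq z^d/M_{f_0}$ from Corollary~\ref{lem:S_def}(a) yields $\pmin\cdot\vol(S)\leq z^d e^{-z}$. Since $z=\ln(100n^4/\tau^2)$ and $n\geq N_1$, this quantity is polynomially small in $n$, in particular at most $\eps/32$ for the chosen constants in $N_1$. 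Thus $\alpha\geq 1/(1+\eps/32)\geq 1-\eps/32$, proving part~(a).

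For part (b), I would split the integrand on $S$ according to whether $\mle(x)\geq \pmin$ or $\mle(x)<\pmin$. On the first region $\tilde f=\mle$, hence $|g-\mle|=(1-\alpha)\mle$; on the second region $\tilde f=\pmin$, hence $|g-\mle|\leq \alpha\pmin+\mle(x)\leq 2\pmin$. Outside $S$, both densities vanish. Integrating gives
\[
\int_{\R^d}|g(x)-\mle(x)|\,dx \;\leq\; (1-\alpha)\int_S\mle(x)\,dx \;+\; 2\pmin\cdot\vol(S) \;\leq\; \eps/32 + 2\cdot z^d e^{-z},
\]
and the same estimate $z^d e^{-z}\leq \eps/32$ used in part~(a) bounds the total by $3\eps/32$. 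Dividing by $2$ produces $\TV(g,\mle)\leq 3\eps/64$.

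The only real ``obstacle'' is purely quantitative: verifying that the choice of $N_1$ is generous enough that $z^d e^{-z}$ (with $z=\ln(100n^4/\tau^2)$) is at most $\eps/32$. This is routine since $z$ is only logarithmic in $n/(\eps\tau)$ while $e^{-z}=\tau^2/(100n^4)$ decays polynomially, and the exponent $(d+3)/2$ in $N_1$ ensures plenty of slack. No new tools beyond Lemma~\ref{lem:mle_support} and Corollary~\ref{lem:S_def} are needed.
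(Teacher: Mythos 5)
Your proposal is correct and follows essentially the same route as the paper: both rest on $\mle$ being supported in $S$ (via Lemma~\ref{lem:mle_support} and the convexity of $S$) together with the estimate $\pmin\cdot\vol(S)\leq z^d e^{-z}\leq \eps/32$. The only cosmetic difference is in part (b), where you split $S$ by whether $\mle\geq\pmin$ while the paper uses the algebraic decomposition $\alpha\tilde f-\mle=(\alpha-1)\mle+\alpha(\tilde f-\mle)$; both yield the same $3\eps/64$ bound.
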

\begin{proof}
We start by proving (a).
By the definition of $g$ and Lemma \ref{lem:mle_support}, we have $\alpha = \alpha \int_{S} \mle(x) dx \leq \alpha \int_S f'(x) dx = \int_S g(x) dx = 1$, i.e., 
$\alpha \leq 1$.
Furthermore, \new{by the definition of $\pmin$ and Corollary~\ref{lem:S_def},} we have
\begin{align}
\pmin \cdot \vol(S) 
\leq \frac{M_{f_0}}{(100 n^4/\tau^2)} \cdot \frac{O((\ln(100 n^4/\tau^2))^d)}{M_{f_0}}
\leq {\eps/32}, \label{eq:p_min_vol}
\end{align}
and therefore
\begin{align*}
1 &= \int_S {g(x)} dx 
\leq \alpha \left( \int_S \pmin dx + \int_S \mle(x) dx \right) 
\leq \alpha ( \pmin \cdot \vol(S) + 1) 
\leq  {\alpha(\eps/32 + 1)}.
\end{align*}
From this it follows that
 {$\alpha \geq 1/(1+\eps/32) \geq 1 - \eps/32$}.
We have
\begin{align}
\TV(g, \mle) &= \frac{1}{2} \int_{\mathbb{R}^d} |g(x) - \mle(x)| dx = \frac{1}{2} \int_{S} |g(x) - \mle(x)| dx \;, \label{eq:tv1}
\end{align}
since \nnew{$g(x) = 0$ for $x \notin S$} and $\mle$ is supported in $S$ by Lemma \ref{lem:mle_support}.
We can then write
\begin{align*}
\frac{1}{2} \int_{S} |g(x) - \mle(x)| dx 
&= \frac{1}{2} \int_{S} |\alpha f'(x) - \mle(x)| dx \\
&\leq \frac{1}{2} \int_{S} |\alpha - 1| \cdot \mle(x) dx + \pmin \cdot \vol(S) \\
&\leq \frac{|\alpha - 1|}{2} \int_{S} \mle(x) dx +  {\eps/32} & \text{(from \eqref{eq:p_min_vol})} \\
&\leq \frac{|1 - \alpha|}{2} +  {\eps/32} \leq  {3\eps/64} \;,
\end{align*}
which completes the proof.
\end{proof}

To deal with the dependence on the maximum value of $f$ in Lemma~\ref{lem:technical_f}, 
we need to bound  {the maximum value of the MLE}.

\begin{lemma}\label{lem:mle_mf}
Let $n \geq N_1$.
Let $X_1, \ldots, X_n$ be samples drawn from $f_0$.
Then \new{conditioning on the events of Corollary~\ref{cor:imp_convex_set_prob} and Lemma~\ref{lem:f_0_bound}}, 
for any $f \in \mathcal{F}_d$ with maximum value $M_f$ such that 
$\ln (M_f/\pmin) \geq  {4\ln(100 n^4/\tau^2)}$, we have
$
\frac{1}{n} \sum_{i=1}^n \ln f(X_i) < \frac{1}{n} \sum_{i=1}^n \ln f_0(X_i).
$
\end{lemma}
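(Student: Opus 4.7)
My plan is to exploit the geometric fact that if $f$'s maximum $M_f$ is much larger than $M_{f_0}$, then by log-concavity $f$ must be concentrated on a set of tiny volume, while samples from $f_0$ are spread out, so most samples land where $f$ takes extremely small values. Setting $w := \ln(M_f/\pmin) \ge 4z$ (with $z = \ln(100 n^4/\tau^2)$), I would work with the convex super-level set $A := L_f(M_f e^{-2w})$. The key design choice is the \emph{doubled} threshold $M_f e^{-2w} = \pmin e^{-w}$, pushed well below $\pmin$: outside $A$ this yields the strong bound $\ln f(X_i) < \ln M_f - 2w = \ln \pmin - w$, rather than the weaker $\ln f(X_i) < \ln \pmin$ that would come from the ``natural'' choice $A = L_f(\pmin)$.

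First I would apply Lemma~\ref{lem:DKS1} to the log-concave density $f$ with parameter $2w \ge 1$ to get $\vol(A) \le (2w)^d/M_f$, and hence $f_0(A) \le M_{f_0}\,\vol(A) = (2w)^d e^{z-w}$. Because $n \ge N_1$ forces $z = \Omega(d\ln(d/\eps))$, the function $w \mapsto (2w)^d e^{z-w}$ is decreasing on $[4z,\infty)$, so it is maximized at $w = 4z$ with value $(8z)^d e^{-3z} \le \delta$. Corollary~\ref{cor:imp_convex_set_prob} then gives $f_n(A) \le 2\delta$. Splitting the empirical log-likelihood of $f$ by $A$ and $A^c$,
\[
\frac{1}{n}\sum_{i=1}^n \ln f(X_i) \;\le\; \ln M_f - 2w\bigl(1 - f_n(A)\bigr) \;\le\; \ln \pmin - w(1 - 4\delta).
\]
For the matching lower bound on $\frac{1}{n}\sum_i \ln f_0(X_i)$, I would integrate the tail estimate of Lemma~\ref{lem:DKS1}(b) to obtain $\mathbb{E}_{X \sim f_0}[\ln(M_{f_0}/f_0(X))] = O(d\ln d)$, hence $\mathbb{E}[\ln f_0] \ge \ln M_{f_0} - O(d\ln d)$; combined with Lemma~\ref{lem:f_0_bound} this gives $\frac{1}{n}\sum_i \ln f_0(X_i) \ge \ln \pmin + z - O(d\ln d) - \eps/8$.

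Subtracting (the $\ln \pmin$ terms cancel) and using $w \ge 4z$ together with $\delta \le 1/8$, the gap in favor of $f_0$ is at least $z + w(1 - 4\delta) - O(d\ln d) - \eps/8 \ge 3z - O(d\ln d) - \eps/8$, which is strictly positive under $n \ge N_1$, finishing the proof. The main obstacle is precisely the choice of truncation level defining $A$: with the natural choice $A = L_f(\pmin)$, one only gets $\ln f < \ln \pmin$ on $A^c$, producing a positive slack $f_n(A) \cdot w = O(\delta w)$ that explodes once $w \gtrsim z^2/\eps$ and breaks the argument for very peaked $f$. The doubled threshold $M_f e^{-2w}$ converts this slack into $\Omega(w)$ of \emph{additional} savings in favor of $f_0$, while $f_0(A)$ nevertheless stays below $\delta$ uniformly over all $w \ge 4z$ because the dominant factor $(8z)^d e^{-3z}$ is controlled at the endpoint $w = 4z$.
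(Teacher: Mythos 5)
Your proposal is correct and follows essentially the same route as the paper's proof: bound the volume of a superlevel set $A$ of $f$ via Lemma~\ref{lem:DKS1}, convert to $f_0(A)\le M_{f_0}\vol(A)$, transfer to $f_n(A)$ via Corollary~\ref{cor:imp_convex_set_prob} (using convexity of $A$), and conclude that most samples see tiny values of $f$, while Lemma~\ref{lem:f_0_bound} anchors the empirical log-likelihood of $f_0$. The only differences are bookkeeping: the paper picks the threshold $\gamma$ adaptively from $\frac{1}{n}\sum_i\ln f_0(X_i)$ so that $f_n(A)<1/2$ already suffices, whereas you fix the threshold at $M_f e^{-2w}$, get the sharper $f_n(A)\le 2\delta$, and compensate with the extra (correct, easily integrated) estimate $\E_{f_0}[\ln(M_{f_0}/f_0)]=O(d\ln d)$.
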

This holds because a density $f$ with a large $M_f$ is small outside on a set of small volume, which most of the samples will be outside. We defer this proof to Appendix \ref{sec:main-appendix}.

We have now reached the final result of this section, from which Theorem \ref{thm:h-main-result} directly follows.
Combining previous  {lemmas}, we show that the likelihood under $f_0$ of the truncated 
MLE is close to that of $f_0$ and so they are close in KL divergence, 
which leads to a bound in the Hellinger distance of the MLE itself:

\begin{lemma}\label{lem:mle_dist_N_1}
Let $n \geq N_1$.
Let $X_1, \ldots, X_n$ be samples drawn from $f_0$.
With probability at least $1 - \tau$, we have that $h^2(f_0, \mle) \leq \eps$.
\end{lemma}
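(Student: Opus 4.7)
The plan is to chain together the preceding lemmas to show that the log-likelihood of $\tilde{f}$ under $f_0$ is nearly as large as that of $f_0$, and then convert this gap into a Hellinger bound via Jensen's inequality. First, I would take a union bound over the favorable events of Corollary~\ref{lem:S_def}, Lemma~\ref{lem:alt_convex_set_prob}, and Lemma~\ref{lem:f_0_bound}, which all hold simultaneously with probability at least $1-\tau$.

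Second, by optimality of the MLE we have $\frac{1}{n}\sum_i \ln \mle(X_i) \geq \frac{1}{n}\sum_i \ln f_0(X_i)$, so the contrapositive of Lemma~\ref{lem:mle_mf} forces $\ln(M_{\mle}/\pmin) < 4z$. Applying Lemma~\ref{lem:technical_f} with $f = \mle$ and $\rho = \pmin$ yields $|\E_{f_0}[\ln \tilde{f}] - \E_{f_n}[\ln \tilde{f}]| \leq 4z\delta = \eps/8$. Since $\tilde{f} \geq \mle$ pointwise, combining this with the MLE inequality and Lemma~\ref{lem:f_0_bound} gives
\[
\E_{f_0}[\ln \tilde{f}] \;\geq\; \E_{f_n}[\ln \tilde{f}] - \eps/8 \;\geq\; \E_{f_n}[\ln \mle] - \eps/8 \;\geq\; \E_{f_n}[\ln f_0] - \eps/8 \;\geq\; \E_{f_0}[\ln f_0] - \eps/4.
\]

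Third, applying Jensen's inequality to the convex function $\exp$ under $f_0$ gives
\[
\int \sqrt{f_0 \tilde{f}}\,dx \;=\; \E_{f_0}\!\left[\exp\!\left(\frac{1}{2}\ln(\tilde{f}/f_0)\right)\right] \;\geq\; \exp\!\left(\frac{1}{2}\bigl(\E_{f_0}[\ln \tilde{f}] - \E_{f_0}[\ln f_0]\bigr)\right) \;\geq\; 1 - \eps/8.
\]
To transfer this to a bound on $h^2(f_0, \mle)$, I would use the density $g$ from Lemma~\ref{lem:tv} as an intermediary: the triangle inequality for $h$ together with $h^2 \leq \TV$ gives $h^2(f_0, \mle) \leq 2 h^2(f_0, g) + 3\eps/32$. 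Since $\mle$ is supported in the convex hull of the samples, which lies inside $S$ by Corollary~\ref{lem:S_def}(b), we have $\tilde{f} = \pmin$ on $S^c$; therefore $\int \sqrt{f_0 g}\,dx = \sqrt{\alpha}\bigl(\int \sqrt{f_0 \tilde{f}}\,dx - \sqrt{\pmin}\int_{S^c}\sqrt{f_0}\,dx\bigr)$.

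The principal technical hurdle is controlling the truncation correction $\sqrt{\pmin}\int_{S^c}\sqrt{f_0}$, since $\tilde{f}$ is not a probability density and the standard Hellinger--KL conversion does not directly apply. I would use the layer-cake formula $\int \sqrt{f_0}\,dx = \int_0^{\sqrt{M_{f_0}}} \vol(L_{f_0}(t^2))\,dt$ and bound the superlevel-set volumes with Lemma~\ref{lem:DKS1}; after the substitution $w = \ln(M_{f_0}/t^2)$, the integral reduces to $\frac{1}{2\sqrt{M_{f_0}}}\int_1^\infty w^d e^{-w/2}\,dw = O(d)^d/\sqrt{M_{f_0}}$. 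Multiplying by $\sqrt{\pmin} = \sqrt{M_{f_0}}\,e^{-z/2}$ yields $O(d)^d \tau/n^2$, which is $o(\eps)$ for $n \geq N_1$. Together with $\sqrt{\alpha} \geq 1 - \eps/32$ from Lemma~\ref{lem:tv}(a), this gives $h^2(f_0, g) \leq 5\eps/32 + o(\eps)$, and hence $h^2(f_0, \mle) \leq \eps$.
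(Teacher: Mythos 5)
Your proposal is correct, and its first half — the union bound, the use of the contrapositive of Lemma~\ref{lem:mle_mf} together with MLE optimality to force $\ln(M_{\mle}/\pmin) < 4z$, and the resulting chain $\E_{f_0}[\ln\tilde{f}] \geq \E_{f_0}[\ln f_0] - \eps/4$ via Lemma~\ref{lem:technical_f} and Lemma~\ref{lem:f_0_bound} — is essentially identical to the paper's argument. Where you genuinely diverge is the conversion of the likelihood gap into a Hellinger bound. The paper normalizes first, writes the gap as $\KL(f_0\,\|\,g) \leq 5\eps/16$ for the renormalized truncation $g$ of Lemma~\ref{lem:tv}, and then invokes the black-box inequalities $h^2 \leq \KL$ and $h^2 \leq \TV$ together with the triangle inequality for $h$. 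You instead apply Jensen's inequality directly to the \emph{unnormalized} $\tilde{f}$ to lower-bound $\int\sqrt{f_0\tilde{f}}$, and then pay for the normalization and the truncation on $S^c$ explicitly, via the layer-cake estimate $\int\sqrt{f_0} = O(d)^d/\sqrt{M_{f_0}}$ from Lemma~\ref{lem:DKS1}. Your route costs an extra computation (the correction term $\sqrt{\pmin}\int_{S^c}\sqrt{f_0}$) that the paper's route avoids, and it needs the cruder bound $h^2(f_0,\mle) \leq 2h^2(f_0,g) + 2h^2(g,\mle)$, but the constants still close comfortably. It also buys something: the paper's step $\E_{f_0}[\ln g] = \E_{f_0}[\ln(\alpha\tilde{f})]$ silently ignores that $g$ vanishes on $\R^d\setminus S$, a set of positive $f_0$-measure, so $\KL(f_0\,\|\,g)$ is, read literally, infinite; your formulation works with $\int\sqrt{f_0\tilde{f}}$ and handles the contribution of $S^c$ explicitly, so it sidesteps this technicality entirely. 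Both proofs are valid modulo that point; yours is the more careful of the two at the cost of one extra integral estimate.
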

\begin{proof}
In this lemma, we will apply Lemmas \ref{lem:technical_f}, \ref{lem:f_0_bound}, \ref{lem:tv}, and \ref{lem:mle_mf}. 
By examining the conditions of these lemmas, it is easy to see that with probability at least $1 - \tau$ they all hold. 
We henceforth condition on this event.

Let $X_1, \ldots, X_n$ be samples drawn from $f_0$, let $\mle$ be as in Definition \ref{def:mle}.
Let $g$ and $\nnew{\tilde{f}}$ be as defined in Lemma \ref{lem:tv} and Definition \ref{def:f_prime}.
Let $S$ be as defined in Corollary~\ref{lem:S_def}
Then we have that
\begin{align*}
{\E}_{X \sim f_0}[\ln g(X)] &= {\E}_{X \sim f_0}[\ln(\alpha \nnew{\tilde{f}}(X))] \\
&\geq {\E}_{X \sim f_0}[\ln \nnew{\tilde{f}}(X)]  {- \eps/16} & \text{(since  {$\alpha > 1-\eps/32$})} \\
&\nnew{=} {\E}_{X \sim f_0}[\ln(\max\{\mle(X), \pmin\})]  {- \eps/16} \\
&\geq {\E}_{X \sim f_n}[\ln(\max\{\mle(X), \pmin\})]  {- 3\eps/16} & \text{(by Lemmas \ref{lem:technical_f} and \ref{lem:mle_mf})} \\
&\geq \frac{1}{n} \sum_i \ln \mle(X_i)  {- 3\eps/16} \\
&\geq \frac{1}{n} \sum_i \ln f_0(X_i)  {- 3\eps/16} \\ 
&\geq {\E}_{X \sim f_0}[\ln f_0(X)]  {- 5\eps/16}. & \text{(using Lemma \ref{lem:f_0_bound})}
\end{align*}
Thus, we obtain that
\begin{align}
\KL(f_0 || g) = {\E}_{X \sim f_0}[\ln f_0(X)] - {\E}_{X \sim f_0}[\ln g(X)] \leq  {5\eps/16}. \label{eq:kl_bound}
\end{align}
For the next derivation, we use that the Hellinger distance is related to the total variation distance and the Kullback-Leibler divergence in the following way: For probability functions $k_1, k_2 : \mathbb{R}^d \rightarrow \mathbb{R}$, we have that $h^2(k_1,k_2) \leq \TV(k_1,k_2)$ and $h^2(k_1,k_2) \leq \KL(k_1 || k_2)$.
Therefore, we have that
\begin{align*}
h(f_0, \mle) &\leq h(f_0, g) + h(g, \mle) \\
&\leq \KL(f_0 || g)^{1/2} + \TV(g, \mle)^{1/2} \\
&=  {(5\eps/16)^{1/2}} +  {(3\eps/64)^{1/2}} & \text{(by \eqref{eq:kl_bound} and Lemma \ref{lem:tv})} \\
&\leq \eps^{1/2} \;,
\end{align*}
concluding the proof.
\end{proof}

\section{Warmup for the Proof of Lemma \ref{lem:alt_convex_set_prob}}\label{sec:warmup}

For the sake of exposition of the main ideas used in the proof of Lemma \ref{lem:alt_convex_set_prob}, 
we first prove Lemma \ref{lem:convex_set_prob}, which achieves a weaker bound on the sample complexity, but has a significantly simpler proof.
Let us first give a brief, and somewhat imprecise, overview of the proof of Lemma \ref{lem:convex_set_prob}.
The high-level goal is to approximate some convex set $C\subseteq \mathbb{R}^d$ by some set, belonging to a family of low VC dimension.
We then can obtain the desired bound using Theorem \ref{thm:vc}.
To that end, we compute inner and outer approximations, $\cin$ and $\cout$, of $C$ via polyhedral sets with a small number of facets.
By Lemma \ref{lem:VC_polytopes}, we can argue that the VC dimension of this family is low.
We therefore obtain that $f_0$ and $f_n$ are close on the inner and outer approximations of $C$.
It remains to argue that the total difference between $f_0$ and $f_n$ in $\cout\setminus \cin$ is also small.
It thus suffices to bound the volume of $\cout\setminus \cin$.
This can be achieved by first defining some set $S\subseteq \mathbb{R}^d$ that excludes the tail of $f_0$.
Since $f_0$ is logconcave, we can show that $S$ has small volume.
The final bound is obtained by restricting the above argument on $C\cap S$.

Throughout this section, we define $N_2  {\eqdef} \Theta\left( 2^{O(d)}(d^{(2d+3)}/\eps) (\ln(d^{(d+1)}/(\eps\tau)) )^{(d+1)} \right)^{(d+5)/2}$.

We will require the following simple fact:

\begin{lemma}[see~\cite{ASE:92}]\label{lem:VC_polytopes}
Let $h,d\in  {\Z_+}$, and let ${\cal A}$ be the set of all convex 
polytopes in $\mathbb{R}^d$ with at most $h$ facets.
Then, the VC dimension of ${\cal A}$ is at most $2(d+1) h \log((d+1)h)$.
\end{lemma}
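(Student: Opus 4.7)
My plan is to bound the growth function of $\mathcal{A}$ and invert the resulting inequality, following the standard recipe for controlling the VC dimension of families obtained by intersecting a bounded number of sets drawn from a low-VC-dimension class.

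The first step is to invoke the classical fact that the class $\mathcal{H}$ of closed half-spaces in $\mathbb{R}^d$ has VC dimension exactly $d+1$, so by the Sauer--Shelah lemma, for any finite $X \subseteq \mathbb{R}^d$ with $|X| = n$ the number of distinct traces $\{H \cap X : H \in \mathcal{H}\}$ is at most $(en/(d+1))^{d+1}$. The main combinatorial step is then to observe that every $P \in \mathcal{A}$ can be written as $P = H_1 \cap \cdots \cap H_h$ with each $H_i$ a half-space, so $P \cap X = \bigcap_{i=1}^{h} (H_i \cap X)$ is determined by the $h$-tuple of traces $(H_i \cap X)_{i \in [h]}$. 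Hence the growth function of $\mathcal{A}$ satisfies $\pi_{\mathcal{A}}(n) \leq \left( en/(d+1) \right)^{(d+1)h}$.

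The final step is to note that if some $X$ of size $n$ is shattered by $\mathcal{A}$, then $2^n \leq \pi_{\mathcal{A}}(n)$, which after taking logarithms rearranges to $n \leq (d+1) h \log_2(en/(d+1))$. Writing $k := (d+1)h$ and substituting $n^{*} = 2k \log k$, the right-hand side grows only like $k \log \log k$ while the left-hand side grows like $k \log k$, so the inequality is violated once $k$ exceeds an absolute constant; this yields the asserted VC-dimension bound. I do not anticipate any real obstacle, since the argument is textbook VC bookkeeping. The only mildly delicate point is the final algebraic inversion, where one must confirm that the leading constant $2$ suffices for all $(d+1)h$; this is a routine calculation given the transcendental bound $n \leq k \log_2(en/(d+1))$, and any very small values of $k$ can be handled by direct inspection.
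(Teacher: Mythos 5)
Your proof is correct. The paper does not actually prove this lemma---it is quoted as a known fact from \cite{ASE:92}---but your growth-function argument (Sauer--Shelah for half-spaces, taking the $h$-fold product bound for intersections, then inverting against $2^n$) is the standard proof of that cited result and is exactly the same counting strategy the paper itself deploys when it proves the analogous Lemma~\ref{lem:vc_sets}. The only caveat is the one you already flag: the inversion to the constant $2$ in $2(d+1)h\log((d+1)h)$ is tight (e.g.\ at $(d+1)h=8$ the transcendental bound $n\leq (d+1)h\log_2(en/(d+1))$ gives exactly $48=2\cdot 8\log_2 8$), so the "direct inspection" of small $(d+1)h$ genuinely has to be carried out, but it does go through.
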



The main result of this section is the following:

\begin{lemma}\label{lem:convex_set_prob}
Let $n \geq N_2$.
With probability at least $1 - \frac{3\tau}{10}$ over the choice of  {$X_1,\ldots,X_n$}, 
for any convex set $C \subseteq \R^d$ it holds that 
$\left| f_0(C) - f_n(C) \right| < \delta$.
\end{lemma}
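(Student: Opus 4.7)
The plan is to follow the overview sketched by the authors: since the family of all convex subsets of $\R^d$ has infinite VC dimension, I cannot apply the VC inequality directly, but the family $\mathcal{A}_\ell$ of convex polytopes with at most $\ell$ facets has VC dimension bounded by Lemma~\ref{lem:VC_polytopes}. I will therefore sandwich any convex $C$ between an inner polytope $\cin$ and an outer polytope $\cout$ drawn from $\mathcal{A}_\ell$ via Theorem~\ref{thm:inside_poly}, and control the error coming from $\cout \setminus \cin$ via the volume bound of that same theorem. To keep the ambient volume finite, I will first restrict to $K \eqdef C \cap S$, where $S = L_{f_0}(\pmin)$ is the superlevel set from Corollary~\ref{lem:S_def}; this $S$ has $\vol(S) \leq z^d/M_{f_0}$ and contains all $n$ samples with probability at least $1-\tau/10$.

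Concretely, fix a parameter $\ell$ to be chosen at the end. Since $K$ is a bounded convex body, Theorem~\ref{thm:inside_poly} furnishes polytopes $\cin \subseteq K \subseteq \cout$ in $\mathcal{A}_\ell$ satisfying
\[
\vol(\cout \setminus \cin) \;\leq\; \frac{2\kappa d\, \vol(K)}{\ell^{2/(d-1)}} \;\leq\; \frac{2\kappa d\, z^d}{M_{f_0}\, \ell^{2/(d-1)}} \;.
\]
By Lemma~\ref{lem:VC_polytopes} the VC dimension of $\mathcal{A}_\ell$ is $V = O(d\ell \log(d\ell))$, and combining Theorems~\ref{thm:vc} and~\ref{thm:A-expect-bound} yields, with probability at least $1-\tau/10$, the uniform bound $\|f_0 - f_n\|_{\mathcal{A}_\ell} \leq c\sqrt{V/n} + \sqrt{\ln(10/\tau)/(2n)}$ for a universal constant $c$. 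From the sandwich $f_n(\cin) - f_0(\cout) \leq f_n(K) - f_0(K) \leq f_n(\cout) - f_0(\cin)$ and the pointwise bound $f_0(\cout\setminus \cin) \leq M_{f_0}\cdot \vol(\cout\setminus \cin)$, I obtain
\[
|f_n(K) - f_0(K)| \;\leq\; \|f_0-f_n\|_{\mathcal{A}_\ell} \,+\, \frac{2\kappa d\, z^d}{\ell^{2/(d-1)}} \;.
\]

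Finally, conditioning additionally on the event of Corollary~\ref{lem:S_def}(b) that all samples lie in $S$, we have $f_n(C) = f_n(K)$, while $|f_0(C) - f_0(K)| = f_0(C \setminus S) \leq f_0(\R^d \setminus S)$, which is much smaller than $\delta$ by the tail bound in Lemma~\ref{lem:DKS1} and our choice of $z = \ln(100 n^4/\tau^2)$. It thus suffices to choose $\ell$ so that $2\kappa d z^d/\ell^{2/(d-1)} < \delta/4$ and to choose $n$ so that $\|f_0-f_n\|_{\mathcal{A}_\ell} < \delta/4$. The first requirement forces $\ell = \Theta\bigl((d z^d/\delta)^{(d-1)/2}\bigr)$, and the second then requires $n = \Omega(V/\delta^2) = \Omega\bigl(d\ell \log(d\ell)/\delta^2\bigr)$. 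Substituting $\delta = \Theta(\eps/z)$ and $z = \Theta(\ln(n/\tau))$ and solving the resulting implicit inequality by a standard fixed-point argument yields exactly $n \geq N_2$, while a union bound over the three small-probability events (samples in $S$, the VC concentration, and Corollary~\ref{lem:S_def}) gives the claimed failure probability at most $3\tau/10$. The main obstacle is the careful bookkeeping of parameters, in particular verifying that $\ell$ is large enough for Theorem~\ref{thm:inside_poly} to apply and resolving the implicit dependence of $z$ on $n$; no new ideas beyond the overview are required.
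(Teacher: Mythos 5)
Your proposal is correct and follows essentially the same route as the paper: restrict to $C\cap S$ where $S$ is the superlevel set from Corollary~\ref{lem:S_def}, sandwich by inner/outer polytopes from Theorem~\ref{thm:inside_poly}, control the polytope family via Lemma~\ref{lem:VC_polytopes} and the VC inequality plus the concentration bound of Theorem~\ref{thm:A-expect-bound}, and union-bound over the failure events. Your single sandwich inequality $|f_n(K)-f_0(K)| \leq \|f_0-f_n\|_{\mathcal{A}_\ell} + M_{f_0}\vol(\cout\setminus\cin)$ is a clean packaging of what the paper does separately through its inequalities (13)--(15), and your choice $\ell = \Theta\bigl((dz^d/\delta)^{(d-1)/2}\bigr)$ matches the paper's $H$; the remaining parameter bookkeeping is exactly Claim~\ref{claim:easy_vc}.
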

\begin{proof}
 {Recall that $z = \ln(100 n^4/\tau^2)$ and $S = L_{f_0}(M_{f_0} e^{-z})$}.
\nnew{Let $\mathcal{C}$ be the family of convex sets on $\mathbb{R}^d$.
For any $C \in \mathcal{C}$,} let $C' = C \cap S$.
Since $f_0$ is log-concave, it follows that $S$ is convex, and thus $C'$ is also convex.

Let ${\cal E}_1$ be the event that all samples  {$X_1,\ldots,X_n$} lie in $S$.
\nnew{Let $\mathcal{X} = X_1,\ldots,X_n$.}
By Corollary~\ref{lem:S_def}, we have
\begin{align}
\nnew{ {\Pr}_{\mathcal{X}\sim f_0}[{\cal E}_1] 
\geq 1-\tau/10. } \label{eq:pr_E1}
\end{align}
Conditioned on ${\cal E}_1$ occurring, \nnew{we have with probability 1, for any $C \in \mathcal{C}$, $f_n(C)=f_n(C')$.
In other words,
\begin{align}
{\Pr}_{\mathcal{X}\sim f_0}[\forall C \in \mathcal{C}, f_n(C \setminus C')=0 | {\cal E}_1] = 1. \label{eq:f_n_C}
\end{align}
}
From Corollary~\ref{lem:S_def}, we have ${\Pr}_{X\sim f_0}[f_0(X) \leq M_{f_0} / (100 n^4/\tau^2)] \leq \tau/(10n)$, and therefore
\begin{align}
f_0(C\setminus C') \leq f_0(\mathbb{R}^d\setminus S) \leq \tau/(10n) \leq \delta/5. \label{eq:f_0_C}
\end{align}
Combining \eqref{eq:pr_E1}, \eqref{eq:f_n_C}, \eqref{eq:f_0_C}, and letting 
$Q = \sup_{C \in \mathcal{C}} |f_0(C\setminus C') - f_n(C\setminus C')|$, we have that
\nnew{
\begin{align}
{\Pr}_{\mathcal{X}\sim f_0}\left[ Q \leq \delta/5\right] 
&\geq {\Pr}_{\mathcal{X}\sim f_0}\left[ Q \leq \delta/5 | {\cal E}_1 \right] \cdot {\Pr}_{\mathcal{X}\sim f_0}[{\cal E}_1] \notag \\
&\geq {\Pr}_{\mathcal{X}\sim f_0}\left[ \forall C \in \mathcal{C}, f_n(C\setminus C') = 0 | {\cal E}_1 \right] \cdot {\Pr}_{\mathcal{X}\sim f_0}[{\cal E}_1] \notag \\
 &\geq 1-\tau/10. \label{eq:prCCprime}
\end{align}
}
Let ${\cal A}$ be the set of convex polytopes in $\mathbb{R}^d$ 
with at most $H=(10 \kappa dz^{d}/\delta)^{(d-1)/2}$ facets, 
where $\kappa$ is the universal constant in Theorem \ref{thm:inside_poly}.
By Theorem \ref{thm:inside_poly}, there exist convex polytopes $T, T' \in {\cal A}$, 
with $T \subseteq C' \subseteq T'$, such that
$
\vol(C' \setminus T) 
\leq \frac{\delta}{10 z^d} \vol(S) 
\leq \frac{\delta}{10 M_{f_0}}
$
and
$
\vol(T'\setminus C') 
\leq \frac{\delta}{10 z^d} \vol(S) \leq 
\frac{\delta}{10 M_{f_0}}.
$
Therefore, since $M_{f_0}$ is the maximum value of $f_0$, we have
\begin{align}
f_0(C'\setminus T) &\leq \vol(C'\setminus T) \cdot M_{f_0} \leq \delta/10,  \label{eq:f0_Cp_T}
\end{align}
and
\begin{align}
f_0(T'\setminus C') &\leq \vol(T'\setminus C') \cdot M_{f_0} \leq \delta/10. \label{eq:f0_Tp_Cp}
\end{align}
 {Noting that $\E[|f_0(T)-f_n(T)|] \leq \E[||f_0 - f_n||_{\mathcal{A}}]$, by Theorem \ref{thm:vc} we have for some universal constant $\alpha$ that}
$
\E[|f_0(T)-f_n(T)|]
\leq  { \sqrt{ \alpha V / n } }
$.
The following claim is obtained via a simple calculation (see Appendix~\ref{sec:main-appendix}):

\begin{claim}\label{claim:easy_vc}
 {For $n \geq N_2$, we have that $\sqrt{\alpha V / n} \leq \delta / 10$}. 
\end{claim}

\nnew{
Let ${\cal E}_2$ be the event that $||f_0-f_n||_{\mathcal{A}} \leq 3\delta/10$.
By Claim~\ref{claim:easy_vc} and Theorem~\ref{thm:A-expect-bound} we have
\begin{align}
{\Pr}_{\mathcal{X}\sim f_0}[{\cal E}_2] 
&= 1 - {\Pr}_{\mathcal{X}\sim f_0}[||f_0-f_n||_{\mathcal{A}} > 3\delta/10] \notag \\
&\geq 1 - {\Pr}_{\mathcal{X}\sim f_0}[||f_0-f_n||_{\mathcal{A}} - \E[||f_0-f_n||_{\mathcal{A}}] > \delta/5] \notag \\
&\geq 1 - e^{-2n(\delta/5)^2} \notag \\
&\geq 1 - \tau/5. \label{eq:pr_E2}
\end{align}
For any choice of samples $X_1,\ldots,X_n$, we have
\begin{align}
f_n(C') &\geq f_n(T) & \text{(since $T\subseteq C'$)} \notag \\
 &\geq f_0(C') - f_0(C'\setminus T) - |f_0(T)-f_n(T)| \notag \\
 &\geq f_0(C') - \frac{\delta}{10} - |f_0(T)-f_n(T)|. & \text{(by \eqref{eq:f0_Cp_T})} \label{eq:fn_Cp_T_1}
\end{align}
In a similar way, using that $C'\subseteq T'$, we have
\begin{align}
f_n(C') 
 &\leq f_0(C') + \frac{\delta}{10} + |f_0(T')-f_n(T')|. & \text{(by \eqref{eq:f0_Tp_Cp})} \label{eq:fn_Cp_Tp_1}
\end{align}
By \eqref{eq:fn_Cp_T_1} and \eqref{eq:fn_Cp_Tp_1} and the union bound, we obtain
\begin{align}
|f_n(C')-f_0(C')| \leq \frac{\delta}{10} + \max\left\{ |f_0(T)-f_n(T)|, |f_0(T')-f_n(T')| \right\}. \label{eq:Pr_fn_Cp_f0_Cp}
\end{align}
Combining \eqref{eq:prCCprime}, \eqref{eq:pr_E2}, \eqref{eq:Pr_fn_Cp_f0_Cp}, 
and letting $Q' = \sup_{C \in \mathcal{C}} |f_n(C)-f_0(C)|$, we get
\begin{align}
{\Pr}_{\mathcal{X}\sim f_0}[ Q' \leq 2\delta/5] \notag 
&\geq {\Pr}_{\mathcal{X}\sim f_0}[( \sup_{C \in \mathcal{C}} |f_n(C\setminus C')-f_0(C\setminus C')| \leq \delta/5) \wedge Q \leq 3\delta/10)] \notag\\
&\geq {\Pr}_{\mathcal{X}\sim f_0}[( \sup_{C \in \mathcal{C}} |f_n(C\setminus C')-f_0(C\setminus C')| \leq \delta/5) \wedge ( ||f_n - f_0||_{\mathcal{A}} \leq 3\delta/10)] \notag\\
 &\geq 1-3\tau/10 \notag,
\end{align}
which concludes the proof.
}
\end{proof}

\section{Conclusions} \label{sec:conc}

In this paper, we gave the first sample complexity upper bound for the MLE of
multivariate log-concave densities on $\R^d$, for any $d \geq 4$. 
Our upper bound agrees with the previously known lower bound up to a multiplicative factor of
$\tilde O_d (\eps^{-1})$.

A number of open problems remain:
What is the {\em optimal} sample complexity of the multivariate log-concave MLE?
In particular, is the log-concave MLE sample-optimal for $d \geq 4$?
Does the multivariate log-concave MLE have similar adaptivity properties
as in one dimension? And is there a polynomial time algorithm to compute it?



\bibliography{allrefs}

\appendix

\section{Proof of Lemma \ref{lem:alt_convex_set_prob}}\label{sec:full-proof}

We are now ready to prove the main technical part of our work, which is Lemma \ref{lem:alt_convex_set_prob}.
The proof builds upon the argument used in the proof of Lemma \ref{lem:convex_set_prob}, which achieves a weaker sample complexity bound.
Recall that in the proof of Lemma \ref{lem:convex_set_prob} we use inner and outer polyhedral approximations of $C$, restricted on some appropriate bounded $S\subseteq \mathbb{R}^d$.
The main difference in the proof of Lemma \ref{lem:alt_convex_set_prob} is that we now use roughly $O(\log n)$ inner and outer polyhedral approximations of intersections of $C$ with different super-levelsets of $f_0$.
We need slightly more samples due to the higher number of facets, and consequently higher VC dimension of the resulting approximations.
However, since we use a finer discretization of the values of $f_0$, we incur lower error in total.

The following Lemma is implicit in \cite{DiakonikolasKS17-lc}. We reproduce its proof for completeness in Appendix~\ref{sec:main-appendix}.

\begin{lemma}\label{lem:vc_sets}
Let $L,H \in  {\Z_+}$.
We define the set $\mathcal{A}_{H,L}$, elements of which are defined by the following process: Starting with $L$ convex polytopes each with at most $H$ facets, all combinations of intersection, difference, and union of these polytopes are elements of $\mathcal{A}_{H,L}$.
If $V$ is the VC dimension of $\mathcal{A}_{H,L}$, then $V/\log(V) = O(dLH)$.
\end{lemma}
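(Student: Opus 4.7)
The plan is to view each element of $\mathcal{A}_{H,L}$ as being jointly parameterized by $O(dLH)$ real coefficients together with a discrete Boolean choice, and then to bound the growth function by a sign-counting argument over the continuous parameters. First, since any convex polytope with at most $H$ facets is the intersection of $H$ halfspaces, an $L$-tuple of such polytopes is determined by $p = (d+1)LH$ real halfspace coefficients $\theta \in \R^p$, and any element of $\mathcal{A}_{H,L}$ is specified by such a $\theta$ together with a Boolean function $\phi: \{0,1\}^L \to \{0,1\}$ encoding the particular sequence of intersections, differences, and unions; since those three operations generate all Boolean operations on $L$ inputs, $\phi$ ranges over a set of size at most $2^{2^L}$.

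Next, for a fixed $\phi$, I would bound the growth function of the sub-class $\mathcal{A}_{H,L}^{\phi}$ on $n$ points $x_1,\ldots,x_n$ using Warren's theorem (a Milnor-Thom sign-pattern bound). For fixed samples, membership $x_k \in \mathcal{A}_{H,L}^{\phi}(\theta)$ is determined only by the signs of the $nLH$ affine polynomials in $\theta$ (one per halfspace per sample point), so $\R^p$ partitions into at most $O(nLH)^p$ sign cells, on each of which the induced dichotomy on $x_1,\ldots,x_n$ is constant. Solving $2^n \leq O(nLH)^{(d+1)LH}$ then yields VC dimension $O(dLH \log(dLH))$ for the fixed-$\phi$ sub-class.

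Union-bounding over the at most $2^{2^L}$ Boolean functions $\phi$ gives $\Pi_{\mathcal{A}_{H,L}}(n) \leq 2^{2^L} \cdot O(nLH)^{(d+1)LH}$, so the shattering condition $2^n \leq \Pi_{\mathcal{A}_{H,L}}(n)$ rearranges to $n \leq 2^L + (d+1)LH \log_2 O(nLH)$. In the regime where the $2^L$ contribution is dominated by the second term, this yields $V = O(dLH \log(dLH))$, and hence $V/\log(V) = O(dLH)$ as claimed.

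The main obstacle will be the contribution from the Boolean function, since the naive count $2^{2^L}$ is enormous relative to the target bound. The essential refinement is that within each sign cell the $n$ sample points carry at most $\min(n, 2^L)$ distinct $L$-bit polytope-indicator vectors, so the number of distinct dichotomies contributed by varying $\phi$ is at most $2^{\min(n,2^L)}$ rather than $2^{2^L}$; tracking this more carefully is what allows the Boolean term to be absorbed into the $O(dLH \log(dLH))$ bound in the parameter regime relevant to the application in Lemma~\ref{lem:alt_convex_set_prob}.
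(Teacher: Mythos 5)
Your fixed-$\phi$ step is sound: for a single Boolean pattern $\phi$, membership is a fixed function of the signs of $LH$ affine forms in the $(d+1)LH$ halfspace parameters, so Warren's theorem bounds the growth function by $O(nLH)^{(d+1)LH}$ and gives $V=O(dLH\log(dLH))$ for that sub-class; this plays exactly the role of the paper's count of $O(t)^{dLH}$ halfspace traces. The gap is the last step, and the repair you sketch does not close it. In the only regime relevant to Lemma~\ref{lem:alt_convex_set_prob} one has $L=\Theta(\log n)$, hence $2^L=\mathrm{poly}(n)\gg n$, so your refined count $2^{\min(n,2^L)}$ of dichotomies obtained by varying $\phi$ inside a sign cell is just $2^n$; the shattering inequality becomes $2^n\le O(nLH)^{(d+1)LH}\cdot 2^n$, which holds for every $n$ and yields no bound. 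There is no parameter regime here in which "the $2^L$ contribution is dominated by the second term."

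The obstacle is not a bookkeeping issue: for the class as literally defined (\emph{all} Boolean combinations of the $L$ polytopes) the claimed bound is false, so no accounting of $\phi$ can succeed. Take $d=2$, $H=4$, and let the $L$ polytopes be $L/2$ pairwise disjoint thin horizontal rectangles $R_1,\dots,R_{L/2}$ and $L/2$ pairwise disjoint thin vertical rectangles $C_1,\dots,C_{L/2}$; place one point in each cell $R_i\cap C_j$. Every subset $S$ of these $L^2/4$ points equals the trace of $\bigcup_{(i,j)\in S}(R_i\cap C_j)$, a union of pairwise intersections, so $\Omega(L^2)$ points are shattered, exceeding $O(dLH\log(dLH))=O(L\log L)$. (You have in fact located the weak point of the paper's own argument, which silently drops the choice of which cells of the hyperplane arrangement to include in the union.) The statement and your proof both become correct once $\mathcal{A}_{H,L}$ is restricted to Boolean combinations drawn from a small fixed family of formulas; the application only needs the two shapes $\cin=\bigcup_{i}\pini$ and $\cout=\bigcup_{i}(\pouti\setminus\bigcup_{j<i}P^S_j)$, i.e., $O(1)$ choices of $\phi$, and then the union bound over $\phi$ adds only $O(1)$ to the exponent and your Warren-theorem computation gives $V/\log V=O(dLH)$ as required.
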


We are now prepared to present the proof of Lemma \ref{lem:alt_convex_set_prob}.
Let 
\[
S_i = L_{f_0}(M_{f_0} e^{-i})
\]
and let $S_0 = \emptyset$.
\nnew{
Let $L = \ln(100 n^4/\tau)$. Note that by Lemma \ref{lem:DKS1}, we have that ${\Pr}_{X\sim f_0}[f_0(X) \leq M_{f_0} e^{-z}] = O(d)^d e^{-z/2}$ and thus
\begin{align*}
{\Pr}_{X\sim f_0}[X \notin S_L] &= {\Pr}_{X\sim f_0}[f_0(X) < M_{f_0} e^{-L}] \leq \frac{\tau}{10n}.
\end{align*}
Let ${\cal E}_1$ be the event that all samples  {$X_1, \ldots, X_n$} lie in $S_L$. 
Let $\mathcal{X} = X_1, \ldots, X_n$.
We have that
\begin{align}
 {{\Pr}_{\mathcal{X}\sim f_0}[{\cal E}_1] \geq 1-\tau/10.} \label{eq:pr_E4}
\end{align}
} 
\nnew{
Let $\mathcal{C}$ be the set of convex sets in $\mathbb{R}^d$.
For any $C \in \mathcal{C}$, for all $i \in [L]$, let 
\[
C_i = C \cap S_i.
\]
Note that, conditioned on ${\cal E}_1$ occurring, we have with probability 1 that, for all $C \in \mathcal{C}$, $f_n(C) = f_n(C_L)$. In other words, 
\begin{align}
{\Pr}_{\mathcal{X}\sim f_0}[\forall C \in \mathcal{C}, f_n(C \setminus C_L) = 0 |  {{\cal E}_1}] = 1. \label{eq:C_CL_FN}
\end{align}
Furthermore, by our choice of $L$ we have  {$f_0(\R^d \setminus S_L) \leq \frac{\tau}{10n}$}, and therefore
\begin{align}
f_0(C \setminus C_L) \leq \frac{\tau}{10n} \leq \delta / 5. \label{eq:C_CL_f0}
\end{align}
Combining \ref{eq:pr_E4}, \ref{eq:C_CL_FN}, \ref{eq:C_CL_f0}, and letting 
$Q = \sup_{C \in \mathcal{C}} |f_0(C \setminus C_L) - f_n(C \setminus C_L)|$, we have
\begin{align}
{\Pr}_{\mathcal{X}\sim f_0} \left[ Q \leq \delta / 5 \right]
&\geq {\Pr}_{\mathcal{X}\sim f_0} \left[ Q \leq \delta / 5 \big| {{\cal E}_1} \right] \cdot {\Pr}_{\mathcal{X}\sim f_0}[ {{\cal E}_1}] \notag \\
&\geq {\Pr}_{\mathcal{X}\sim f_0}[\forall C \in \mathcal{C}, f_n(C \setminus C_L) = 0 | {\cal E}_1] \cdot {\Pr}_{\mathcal{X}\sim f_0}[ {{\cal E}_1}] \notag \\
&\geq 1 - \tau/10. \label{eq:C_L_bound}
\end{align}
}

Using Theorem \ref{thm:inside_poly}, for $i \in [L]$ let $\pini, \pouti$ be convex polytopes with $H = (10 \kappa d / \delta)^{(d-1)/2}$ facets, where $\kappa$ is the universal constant from Theorem \ref{thm:outside_poly}, such that $\pini \subseteq C_i \subseteq \pouti$,
\begin{align}
\vol(C_i \setminus \pini)  \leq \delta \cdot \vol(C_i) / 10 \leq \delta \cdot \vol(S_i) / 10, \label{eq:ci_pini_bound}
\end{align}
and 
\begin{align}
\vol(\pouti \setminus C_i)  \leq \delta \cdot \vol(C_i) / 10 \leq \delta \cdot \vol(S_i) / 10. \label{eq:pouti_ci_bound}
\end{align}

Let 
\[
\cin = \bigcup_{i \in [L]} \pini.
\]

 {For any $i\in [L]$, let $P^S_i$} be a convex polytope with at most $H$ facets such that $P^S_i \subseteq S_i$ and $\vol(S_i \setminus P^S_i) \leq \delta \cdot \vol(S_i)/10$.

Let 
\[
S'_i = \bigcup_{1 \leq j \leq i} P^S_j
\]
and $S'_0 = \emptyset$.
Let 
\[
\cout = \bigcup_{i \in [L]} (\pouti \setminus S'_{i-1}).
\]

We will now show that $\cin$ and $\cout$ satisfy the following conditions:
\begin{enumerate}
\item $\cin \subseteq C_L \subseteq \cout$.
\item $f_0(\cout \setminus C_L) < \delta/2$.
\item $f_0(C_L \setminus \cin) < \delta/2$.
\end{enumerate}

\begin{figure}[t]
\begin{center}
\scalebox{0.3}{\includegraphics{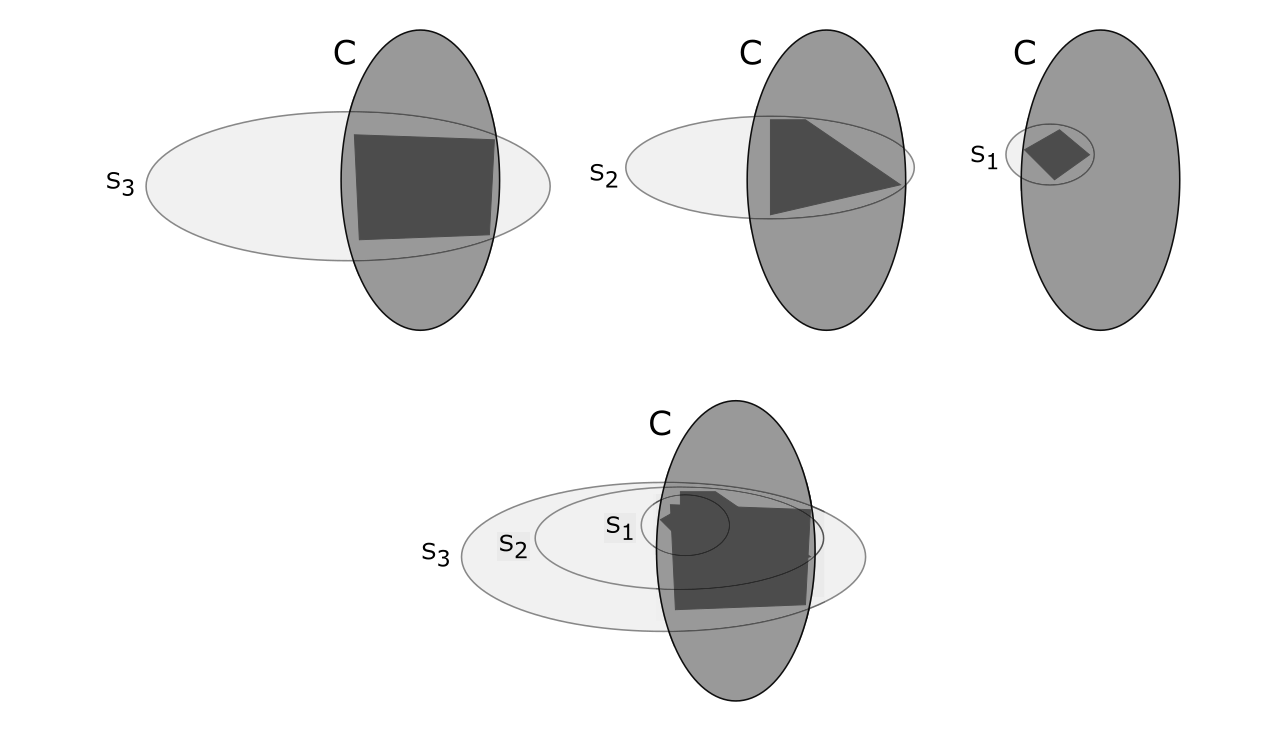}}
\caption{Constructing $\cin$. For each set $S_i$, a convex polytope approximating $C \cap S_i$ from the inside is found, and $\cin$ is formed by taking the union of these convex polytopes. }
\label{fig:c_in}
\end{center}
\end{figure}

First, we consider $\cin$.
Since $\pini \subseteq C_i \subseteq C_L$ for all $i \in [L]$, it follows that $\bigcup_{i \in [L]} \pini = \cin \subseteq C_L$.
Observe that by the above definitions, we have that
\begin{align}
(C_L \setminus \cin) \cap (S_i \setminus S_{i-1})
&\subseteq (C_L \setminus \cin) \setminus S_{i-1} \subseteq (C_L \setminus \pini) \setminus S_{i-1} \label{eq:cl_cin_s_i-1}.
\end{align}
From \eqref{eq:cl_cin_s_i-1}, we therefore have
\begin{align}
(C_L \setminus \cin)
&= \bigcup_{i \in [L]} \left[ (C_L \setminus \cin) \cap (S_i \setminus S_{i-1}) \right] \subseteq \bigcup_{i \in [L]} (C_i \setminus \pini) \setminus S_{i-1}, \label{eq:ci_pini}
\end{align}
and so
\begin{align}
f_0(C_L \setminus \cin)
&\leq \sum_{i\in [L]} f_0((C_i \setminus \pini) \setminus S_{i-1}) & \text{(by \eqref{eq:ci_pini})} \notag \\
&\leq \sum_{i \in [L]} \vol((C_i \setminus \pini) \setminus S_{i-1}) M_{f_0} e^{-(i-1)} \notag \\
&\leq \sum_{i \in [L]} \vol(C_i \setminus \pini) M_{f_0} e^{-(i-1)} \notag \\
&\leq \sum_{i \in [L]} (\delta/10) \vol(S_i) M_{f_0} e^{-(i-1)} & \text{(by \eqref{eq:ci_pini_bound})} \notag \\
&\leq (\delta/10) \sum_{i \in [L]} \vol(L_{f_0}(M_{f_0} e^{-i})) M_{f_0} e^{-(i-1)} \notag \\
&\leq (\delta /10) \int_0^{M_{f_0}} \vol(L_{f_0}(y)) dy < \delta/2. \label{eq:imp_f0_Cp_T}
\end{align}

\begin{figure}
\begin{center}
\scalebox{0.3}{\includegraphics{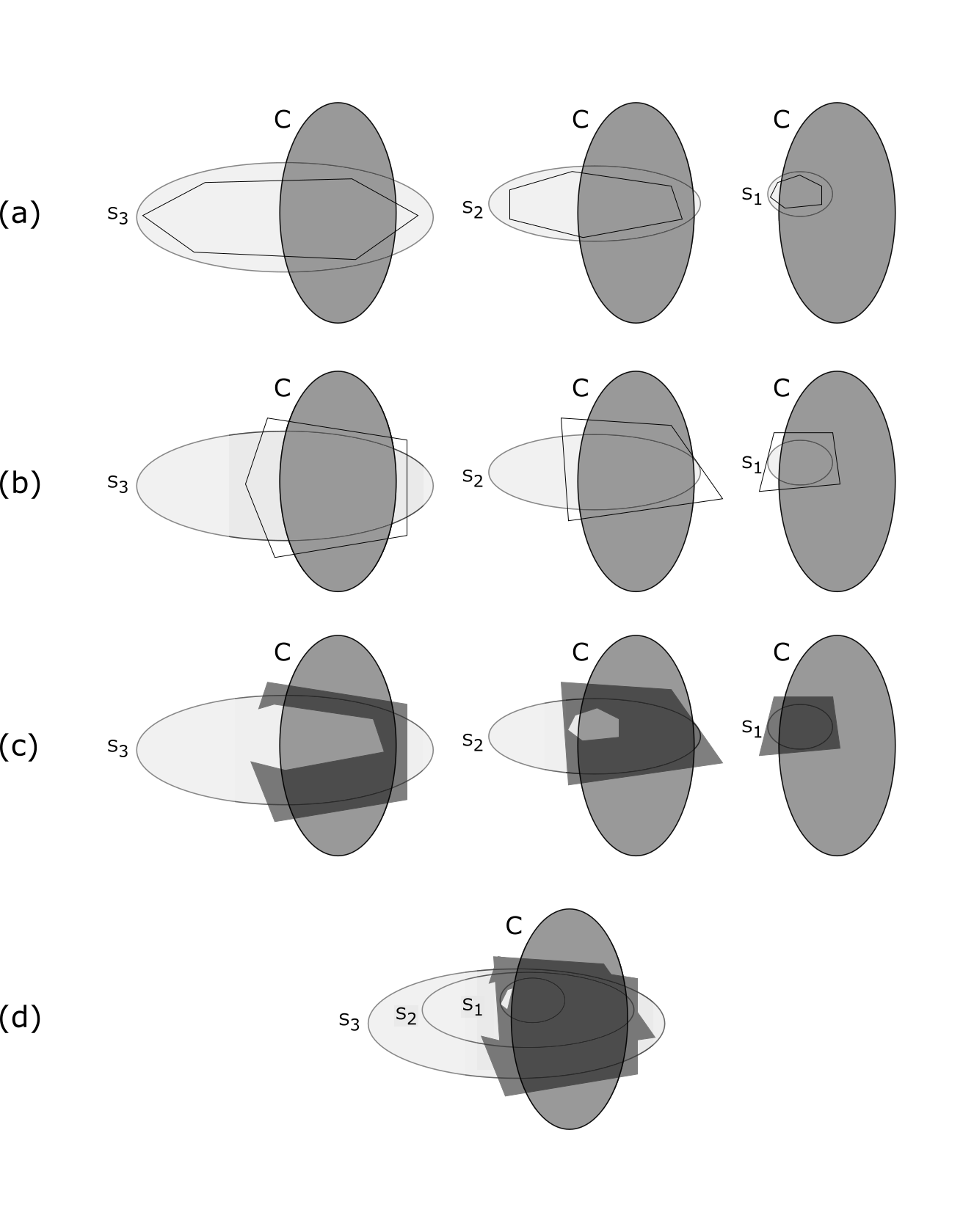}}
\caption{Constructing $\cout$. For each set $S_i$, a convex polytope approximating $S_i$ from the inside is found ($P^S_i$, see row \textbf{(a)}), and a convex polytope approximating $C \cap S_i$ from the outside is found ($\pouti$, see row \textbf{(b)}). For each $i$, the set $\pouti \setminus (\cup_{j = 1}^{i-1} P^S_j)$ is constructed (see row \textbf{(c)}), and the union of these sets finish the construction of $
\cout$ (see row \textbf{(d)}).}
\label{fig:c_out}
\end{center}
\end{figure}

Now we consider $\cout$.
Let $x \in C_L$. Then there exists $i \in [L]$ such that $x \in S_i$ and $x \notin S_{i-1}$.
Thus $x \in \pouti$ and $x \notin S'_{i-1}$, from which we have that $x \in \cout = \bigcup_{i \in [L]} (\pouti \setminus S'_{i-1})$. Therefore $C_L \subseteq \cout$.
Let $y \in \cout \setminus C_L$. 
From the definition of $\cout$, there must exist some $i \in [L]$ such that $y \in \pouti \setminus S'_{i-1}$. If $y \in \pouti \setminus C_i$, we are done. Suppose that $y \notin \pouti \setminus C_i$. Since we have that $y \in \pouti$, we must also have that $y \in C_i$. But $C_i \subseteq C_L$, and we began with $y \in \cout \setminus C_L$, which makes a contradiction. Therefore,
\begin{align}
\cout \setminus C_L \subseteq \cup_{i \in [L]} \left(\pouti \setminus C_i\right). \label{eq:pouti_ci}
\end{align}
Thus, we have that
\begin{align}
f_0(\cout \setminus C_L)
&\leq \sum_{i \in [L]} f_0(\pouti \setminus C_i) & \text{(by \eqref{eq:pouti_ci})} \notag \\
&\leq \sum_{i \in [L]} \vol(\pouti \setminus C_i) M_{f_0} e^{-(i-1)} \notag \\
&\leq \sum_{i \in [L]} (\delta/10) \vol(S_i) M_{f_0} e^{-(i-1)} &\text{(by \eqref{eq:pouti_ci_bound})} \notag \\
&\leq (\delta/10) \sum_{i \in [L]} \vol(L_{f_0}(M_{f_0} e^{-i})) M_{f_0} e^{-(i-1)} \notag \\
&\leq (\delta/10) \int_0^{M_{f_0}} \vol(L_{f_0}(y)) dy < \delta / 2. \label{eq:imp_f0_Tp_Cp}
\end{align}

We define the set $\mathcal{A}$, elements of which are defined by the following process: Starting with $2L$ convex polytopes each with at most $H$ facets, all combinations of intersection, difference, and union of these convex polytopes are elements of $\mathcal{A}$.
Then for any convex set $C$ with $\cin, \cout$ as defined above, we have that $\cout, \cin \in \mathcal{A}$.
From Lemma \ref{lem:vc_sets}, we have that if $V$ is the VC dimension of $\mathcal{A}$, then
\[
V/\ln(V) = O(d L H).
\]
Using Theorem \ref{thm:vc}, we have  {for some universal constant $\alpha$} that 
\begin{align}
\E[|f_0(\cin) - f_n(\cin)|] &\leq \E[||f_0 - f_n||_\mathcal{A}] = \sqrt{\frac{ {\alpha V}}{n}}. \label{eq:hard_vc}
\end{align}

The following claim is obtained via a simple calculation (see Appendix~\ref{sec:main-appendix}):

\begin{claim}\label{claim:hard_vc}
 {For $n \geq N_1$ we have that $\sqrt{\frac{ {\alpha V}}{n}} \leq \delta / 10$.}
\end{claim}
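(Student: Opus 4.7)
The plan is to prove Claim~\ref{claim:hard_vc} by a direct computation, plugging in the definitions of $V$, $H$, $L$, $\delta$ and $N_1$ and checking that the sample bound $n \geq N_1$ dominates $100\alpha V/\delta^2$ (which is exactly the rearrangement of the desired inequality $\sqrt{\alpha V/n} \leq \delta/10$).

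First I would use Lemma~\ref{lem:vc_sets} to bound $V$. Since $V/\ln V = O(dLH)$, we obtain $V = O\bigl(dLH \ln(dLH)\bigr)$. Substituting $H = (10\kappa d/\delta)^{(d-1)/2}$ gives $\ln(dLH) = O\bigl(d \ln(d/\delta) + \ln L\bigr)$, so that
\[
V \;=\; O\!\left(d^2\, L\, \ln(d/\delta)\,(d/\delta)^{(d-1)/2}\right).
\]
Next, I would translate everything into $\eps, d, \tau, n$. Since $z = \ln(100n^4/\tau^2)$ and $L = \ln(100n^4/\tau)$ are both $\Theta(\ln(n/\tau))$, and $1/\delta = 32z/\eps$, we get $1/\delta = O(\ln(n/\tau)/\eps)$ and $\ln(d/\delta) = O(\ln(d/(\eps\tau)) + \ln\ln(n/\tau))$. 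Therefore
\[
\frac{100\,\alpha\, V}{\delta^2} \;=\; O\!\left(\frac{d^{(d+3)/2}}{\eps^{(d+3)/2}} \cdot \ln^{(d+7)/2}\!\bigl(n/(\eps\tau)\bigr)\right).
\]
The second step is to compare this with $N_1 = \Theta\bigl((d^2/\eps)\ln^3(d/(\eps\tau))\bigr)^{(d+3)/2}$. Since $n \leq \poly(d, 1/\eps, 1/\tau)$ (which can be verified a posteriori from the definition of $N_1$), we have $\ln(n/(\eps\tau)) = O(\ln(d/(\eps\tau)))$. The exponent of $1/\eps$ matches exactly on both sides, the exponent of $d$ in $N_1$ is $d+3$ while the required exponent is only $(d+3)/2$ (giving a factor $d^{(d+3)/2}$ of slack), and the exponent of the $\ln$ factor in $N_1$ is $3(d+3)/2$ while only $(d+7)/2$ is required. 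Since $3(d+3)/2 - (d+7)/2 = d+1 \geq 0$, the logarithmic factor in $N_1$ also dominates. Hence, with a sufficiently large constant hidden in the $\Theta$-notation defining $N_1$, we conclude $n \geq N_1 \geq 100\alpha V/\delta^2$, which is equivalent to the claim.

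I expect no substantive obstacle; the entire argument is bookkeeping of exponents of $d$, $1/\eps$, and of $\ln(d/(\eps\tau))$. The only subtlety is the circular-looking dependence on $n$ inside $z$ and $L$, which is resolved by noting that the definition of $N_1$ is polynomial in the relevant parameters, so $\ln n$ is absorbed into the $\ln(d/(\eps\tau))$ factors up to a constant.
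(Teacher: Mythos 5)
The overall strategy (rearrange to $n \geq 100\alpha V/\delta^2$, bound $V$ via Lemma~\ref{lem:vc_sets}, and compare exponents of $d$, $1/\eps$, and the log factor against $N_1$) is exactly the paper's approach. However, there is a genuine gap in your bookkeeping.

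You assert that ``since $n \leq \poly(d,1/\eps,1/\tau)$, we have $\ln(n/(\eps\tau)) = O(\ln(d/(\eps\tau)))$.'' This is false. The threshold $N_1 = \Theta\bigl((d^2/\eps)\ln^3(d/(\eps\tau))\bigr)^{(d+3)/2}$ has $d$ \emph{in the exponent}: it is roughly $d^{\,\Theta(d)}$, so at $n = N_1$ one has $\ln(n/(\eps\tau)) = \Theta\bigl(d\,\ln(d/(\eps\tau))\bigr)$, not $O(\ln(d/(\eps\tau)))$. Consequently, the factor $\ln^{(d+7)/2}(n/(\eps\tau))$ in your expression for $100\alpha V/\delta^2$ contributes not just $\ln^{(d+7)/2}(d/(\eps\tau))$, but an additional $d^{(d+7)/2}$. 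The total required power of $d$ is therefore $(d+3)/2 + (d+7)/2 = d+5$, which exceeds the $d+3$ available in $N_1$. Your claimed ``slack of $d^{(d+3)/2}$'' is actually a \emph{deficit} of $d^2$.

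The claim is still true, but for a different reason than you give: the excess log exponent $3(d+3)/2 - (d+7)/2 = d+1$ leaves $\ln^{d+1}(d/(\eps\tau)) \geq 1$ of slack, and more importantly the universal constant $C$ inside the $\Theta$ in $N_1$ is raised to the power $(d+3)/2$, so $C^{(d+3)/2}$ grows exponentially in $d$ and easily dominates the polynomial deficit $d^2$ for a sufficiently large choice of $C$. To repair the proof you must substitute $\ln(n/(\eps\tau)) = \Theta(d\ln(d/(\eps\tau)))$ honestly, carry the resulting $d^{(d+7)/2}$ through, and then invoke the exponential universal constant in $N_1$ (rather than a phantom $d$-power slack) to close the gap. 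You should also note that, since $n/\ln^{(d+7)/2}(n)$ is increasing in $n$, verifying the inequality at $n = N_1$ suffices to establish it for all $n \geq N_1$, which is required by the claim but not addressed in your write-up.
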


\nnew{
Let ${\cal E}_2$ be the event that $|| f_0 - f_n ||_{\mathcal{A}} \leq \delta / 2$. 
 {Then by \eqref{eq:hard_vc}, Claim~\ref{claim:hard_vc}, and Theorem \ref{thm:A-expect-bound}, we have that}
\begin{align}
{\Pr}_{\mathcal{X}\sim f_0}[{\cal E}_2] &= 1 - {\Pr}_{\mathcal{X}\sim f_0}[|| f_0 - f_n ||_{\mathcal{A}} > \delta / 2] \notag \\
&\geq 1 - {\Pr}_{\mathcal{X}\sim f_0}[|| f_0 - f_n ||_{\mathcal{A}} - \E[|| f_0 - f_n ||_{\mathcal{A}}] > \delta / 10] \notag \\
&\geq 1 - e^{-2n(\delta/10)^2} 
\notag \\
&\geq 1 - \tau/10. \label{eq:imp_cin_bound}
\end{align}
}

\nnew{This next claim follows from \eqref{eq:imp_f0_Cp_T} and \eqref{eq:imp_f0_Tp_Cp}}.
The full proof can be found in Appendix~\ref{sec:main-appendix}.
\nnew{
\begin{claim}\label{claim:hard_vc_cl}
 {If ${\cal E}_1$ and ${\cal E}_2$ hold, we have that $\sup_{C \in \mathcal{C}}|f_n(C_L)-f_0(C_L)| \leq 7\delta/10$}.
\end{claim}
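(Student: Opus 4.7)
The plan is to combine the uniform VC bound from ${\cal E}_2$ on the family $\mathcal{A}$ with the inner-outer sandwich $\cin \subseteq C_L \subseteq \cout$. The starting point is the observation that $\cin = \bigcup_i \pini$ and $\cout = \bigcup_i(\pouti \setminus S'_{i-1})$ are both Boolean combinations of at most $2L$ convex polytopes with at most $H$ facets each (namely the $\pini$, the $\pouti$, and the $P^S_j$), so both sets lie in $\mathcal{A}$ for every convex $C$. Hence under ${\cal E}_2$ we obtain $|f_0(\cin) - f_n(\cin)| \leq \delta/2$ and $|f_0(\cout) - f_n(\cout)| \leq \delta/2$ simultaneously for every $C \in \mathcal{C}$.

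Next I will propagate these uniform deviations through the set containments. Monotonicity of the empirical measure together with ${\cal E}_2$ gives
\[
f_n(C_L) \leq f_n(\cout) \leq f_0(\cout) + \delta/2 = f_0(C_L) + f_0(\cout \setminus C_L) + \delta/2,
\]
and the intermediate expression $(\delta/10)\int_0^{M_{f_0}} \vol(L_{f_0}(y))\,dy$ appearing in \eqref{eq:imp_f0_Tp_Cp} evaluates to $\delta/10$, because $\int_0^{M_{f_0}} \vol(L_{f_0}(y))\,dy = \int_{\mathbb{R}^d} f_0 = 1$. Thus $f_0(\cout \setminus C_L) \leq \delta/10$. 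The lower direction is symmetric, using $\cin \subseteq C_L$ together with \eqref{eq:imp_f0_Cp_T} to bound $f_0(C_L \setminus \cin) \leq \delta/10$. Combining the two directions yields $|f_n(C_L) - f_0(C_L)| \leq \delta/2 + \delta/10 = 3\delta/5 \leq 7\delta/10$, uniformly over $C \in \mathcal{C}$, which is the claim.

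The only substantive point is verifying that $\cin$ and $\cout$ themselves sit inside $\mathcal{A}$, but this is arranged by the very definition of $\mathcal{A}$: the counts $2L$ and $H$ were chosen precisely so that these two approximating sets qualify as Boolean combinations of the allowed polytopes. The hypothesis ${\cal E}_1$ plays no role in this particular derivation, since the uniform deviation is supplied by ${\cal E}_2$ and the polytope approximation bounds are deterministic; ${\cal E}_1$ will only be needed later, when one passes from $f_n(C_L)$ back to $f_n(C)$ to finish the proof of Lemma~\ref{lem:alt_convex_set_prob}.
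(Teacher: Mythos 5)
Your overall route is the paper's: place $\cin,\cout$ in $\mathcal{A}$, invoke ${\cal E}_2$ for the uniform deviation on that family, and propagate it through the sandwich $\cin\subseteq C_L\subseteq\cout$ together with the mass bounds on $\cout\setminus C_L$ and $C_L\setminus\cin$. The one step that does not hold up is your sharpening of those mass bounds to $f_0(\cout\setminus C_L)\le\delta/10$ and $f_0(C_L\setminus\cin)\le\delta/10$ on the grounds that $\int_0^{M_{f_0}}\vol(L_{f_0}(y))\,dy=1$. The integral does equal $1$, but the preceding inequality in \eqref{eq:imp_f0_Tp_Cp} --- bounding $\sum_{i\in[L]}\vol(L_{f_0}(M_{f_0}e^{-i}))\,M_{f_0}e^{-(i-1)}$ by that integral --- is only true up to a constant factor: term $i$ of the sum charges the volume of the \emph{larger} level set $L_{f_0}(M_{f_0}e^{-i})$ against the height $M_{f_0}e^{-(i-1)}$, which exceeds the width $M_{f_0}e^{-i}(e-1)$ of the corresponding slab of the layer-cake integral, so the sum can be as large as roughly $e^2/(e-1)\approx 4.3$ times the integral (for a uniform density the sum is $\sum_i e^{-(i-1)}\approx 1.58$ while the integral is $1$). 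That is precisely why the paper concludes only $<\delta/2$ in \eqref{eq:imp_f0_Cp_T} and \eqref{eq:imp_f0_Tp_Cp}, not $\le\delta/10$. With the correct $\delta/2$ bounds your tally becomes $\delta/2+\delta/2=\delta$ rather than $3\delta/5$, so your argument does not actually reach the stated constant $7\delta/10$.

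To be fair, the paper's own displayed inequalities likewise yield only $\delta/2+\max\{\cdot\}\le\delta$ under ${\cal E}_2$ (which is $\|f_0-f_n\|_{\mathcal{A}}\le\delta/2$), so the constant $7\delta/10$ is a bookkeeping slip on the paper's side as well; it is immaterial to Lemma~\ref{lem:alt_convex_set_prob} since these constants can be retuned by tightening the threshold in ${\cal E}_2$ or the facet counts. Your side observations --- that $\cin$ and $\cout$ lie in $\mathcal{A}$ by construction, and that ${\cal E}_1$ plays no role in this particular claim --- are both correct and match the paper. The substantive point to fix is the unjustified $\delta/10$.
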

}

\nnew{
Combining \eqref{eq:C_L_bound}, \eqref{eq:imp_cin_bound}, {Claim \ref{claim:hard_vc_cl}}, and letting 
$Q' = \sup_{C \in \mathcal{C}} |f_n(C)-f_0(C)|$, we get
\begin{align}
{\Pr}_{\mathcal{X}\sim f_0}[ Q' \leq \delta] 
&\geq {\Pr}_{\mathcal{X}\sim f_0}[ \sup_{C \in \mathcal{C}} |f_n(C\setminus C_L)-f_0(C\setminus C_L)| \leq \delta/5) \wedge  \sup_{C \in \mathcal{C}} |f_n(C_L)-f_0(C_L)| \leq 7\delta/10)] \notag\\
&\geq 1-\frac{\tau}{10}- \frac{\tau}{5} \notag\\
&\geq 1-3\tau/10 \notag,
\end{align}
}
which concludes the proof.

\section{Deferred Proofs}\label{sec:main-appendix}


\subsection{Proof of Lemma \ref{lem:DKS1}}

W.l.o.g.~we may assume that $f(0)=M_f$.
We let $R = L_{f}(M_{f} / e)$. Then using the fact that if $y \leq M_{f} / e$ then $R \subseteq L_{f}(y)$, we have that
\begin{align}
1 &= \int_{\mathbb{R}_+} \vol(L_{f}(y))dy 
\geq \int_{0 \leq y \leq M_{f} / e} \vol(L_{f}(y))dy 
\geq \int_{0 \leq y \leq M_{f} / e} \vol(R)dy 
= \frac{M_{f}}{e} \cdot \vol(R) \label{eq:volR}
\end{align}
Suppose that $f(x)\geq M_f e^{-w}$, for some $x\in \mathbb{R}^d$.
By the definition of log-concavity we have $f(x/w) \geq f(0)^{(w-1)/w}f(x)^{1/w}$.
By the assumption we get
$f(x/w) \geq M_f^{(w-1)/w} (M_f/e^w)^{1/w} = M_f^{(w-1)/w} M_f^{1/w} / e = M_f / e$.
Thus $x/w \in R$, and so $x \in wR$. 
Therefore $L_f(M_f e^{-w}) \subseteq wR$.
Thus by \eqref{eq:volR} we get
\begin{align}
\vol(L_{f}(M_{f} e^{-w})) \leq \vol(wR) \leq \nnew{w^d \cdot \vol(R)} = w^d / M_{f}, \label{eq:volLf}
\end{align}
which proved the first part of the assertion.

It remains to prove the second part.
We have
\begin{align*}
{\Pr}_{X\sim f}[f(X) \leq M_{f} e^{-z}] &\leq \int_{0}^{M_{f} e^{-z}} \vol(L_{f}(y)) dy &  \\
&= \int_{z}^{\infty} \vol(L_{f}(M_{f}e^{-x})) M_{f} e^{-x} dx  & \text{(setting $y=M_f e^{-x}$)} \\
&\leq \int_{z}^{\infty} O(x^d / M_{f}) M_{f} e^{-x} dx & \text{(by \eqref{eq:volLf})} \\
&= \int_{z}^{\infty} O(x^d e^{-x}) dx \\
&\leq \int_{z}^{\infty} O(d)^d e^{-x/2} dx & \text{(since $e^{x/2} \geq (x/2)^d / d!$)} \\
&= O(d)^d e^{-z/2},
\end{align*}
which concludes the proof.


\subsection{Proof of Lemma \ref{lem:aux_exp}}
We begin with a few common definitions and observations.
If $X$ is a random variable defined on a probability space $(\Omega, \Sigma, P)$, then the expected value $\E[X]$ of $X$ is defined as the Lebesgue integral
\[
\E[X] = \int_{\Omega} X(\omega)dP(\omega).
\]
Next, we define two functions 
\[
X_+(\omega) = \max(X(\omega),0)
\]
and
\[
X_-(\omega) = -\min(X(\omega),0).
\]
We observe that these functions are both measurable (and therefore also random variables), and that $\E[X] = \E[X_+] - \E[X_-]$.
Finally, we observe that if $X : \Omega \rightarrow \R_{\geq 0} \cup \{\infty\}$ is a non-negative random variable then
\[
\E[X] = \int_{0}^{\infty} \Pr[X > x] dx.
\]
Similarly, if $X : \Omega \rightarrow \R_{\geq 0} \cup \{-\infty\}$ is a non-positive random variable then
\[
\E[X] = -\int_{-\infty}^{0} \Pr[X < x] dx.
\]

Applying the definitions and observations of the previous paragraph, we have the following derivation:
\begin{equation*}
\begin{split}
{\E}_{Y\sim g}[\phi(Y)] - & {\E}_{Y\sim h}[\phi(Y)] \\
&= \left( {\E}_{Y\sim g}[\phi(Y)_+] - {\E}_{Y\sim g}[\phi(Y)_-] \right) - \left( {\E}_{Y\sim h}[\phi(Y)_+] - {\E}_{Y\sim h}[\phi(Y)_-] \right) \\
&= \left( {\E}_{Y\sim g}[\phi(Y)_+] + {\E}_{Y\sim g}[-\phi(Y)_-] \right) - \left( {\E}_{Y\sim h}[\phi(Y)_+] + {\E}_{Y\sim h}[-\phi(Y)_-] \right) \\
&= \left( \int_{0}^{\infty} {\Pr}_{Y\sim g}[\phi(Y)_+ > x]dx + \int_{-\infty}^{0} {\Pr}_{Y\sim g}[-\phi(Y)_- < x]dx \right) \\ &\quad - \left( \int_{0}^{\infty} {\Pr}_{Y\sim h}[\phi(Y)_+ > x]dx + \int_{-\infty}^{0} {\Pr}_{Y\sim h}[-\phi(Y)_- < x]dx \right) \\
&= \left( \int_{0}^{\infty} {\Pr}_{Y\sim g}[\phi(Y) > x]dx + \int_{-\infty}^{0} {\Pr}_{Y\sim g}[\phi(Y) < x]dx \right) \\ &\quad - \left( \int_{0}^{\infty} {\Pr}_{Y\sim h}[\phi(Y) > x]dx + \int_{-\infty}^{0} {\Pr}_{Y\sim h}[\phi(Y) < x]dx \right) \\
&= \int_{0}^{\infty} {\Pr}_{Y\sim g}[\phi(Y) > x] - {\Pr}_{Y\sim h}[\phi(Y) > x] dx \\ &\quad + \int_{-\infty}^{0} {\Pr}_{Y\sim g}[\phi(Y) < x] - {\Pr}_{Y\sim h}[\phi(Y) < x] dx \\
&= \int_{0}^{\infty} (1 - {\Pr}_{Y\sim g}[\phi(Y) < x]) - (1 - {\Pr}_{Y\sim h}[\phi(Y) < x]) dx \\ &\quad + \int_{-\infty}^{0} {\Pr}_{Y\sim g}[\phi(Y) < x] - {\Pr}_{Y\sim h}[\phi(Y) < x] dx \\
&= \int_{0}^{\infty} {\Pr}_{Y\sim h}[\phi(Y) < x] - {\Pr}_{Y\sim g}[\phi(Y) < x]) dx \\ &\quad + \int_{-\infty}^{0} {\Pr}_{Y\sim g}[\phi(Y) < x] - {\Pr}_{Y\sim h}[\phi(Y) < x] dx \\
&\leq \int_{0}^{\infty} \left| {\Pr}_{Y\sim g}[\phi(Y) < x] - {\Pr}_{Y\sim h}[\phi(Y) < x] \right| dx \\ &\quad + \int_{-\infty}^{0} \left| {\Pr}_{Y\sim g}[\phi(Y) < x] - {\Pr}_{Y\sim h}[\phi(Y) < x] \right| dx \\
&= \int_{-\infty}^{\infty} \left| {\Pr}_{Y\sim g}[\phi(Y) < x] - {\Pr}_{Y\sim h}[\phi(Y) < x] \right| dx.
\end{split}
\end{equation*}
A symmetric argument shows that
\[
{\E}_{Y\sim h}[\phi(Y)] - {\E}_{Y\sim g}[\phi(Y)] \leq \int_{-\infty}^{\infty} \left| {\Pr}_{Y\sim h}[\phi(Y) < x] - {\Pr}_{Y\sim g}[\phi(Y) < x] \right| dx,
\]
concluding the proof.

\subsection{ {Proof of Lemma \ref{lem:f_0_bound}}}
\nnew{Recall that $z = \ln (100 n^4 /\tau^2)$, $S = L_{f_0}(M_{f_0} e^{-z})$, and $\pmin = M_{f_0} /(100n^4/\tau^2)$.}
Note that for any $x \in S$, we have $f_0(x) \geq \new{\pmin}$ by construction.
\new{Since we have conditioned on the event of Corollary~\ref{lem:S_def} holding, it follows that for each $i \in [n]$, 
$f_0(X_i) \geq \new{\pmin}$.}
Therefore, letting $\rho \eqdef \new{\pmin}$, we have
\begin{align}
\left| \frac{1}{n} {\sum_{i=1}^n \ln( f_0(X_i) ) } - {\E}_{X\sim f_0}\left[\ln f_0(X)\right] \right| \notag 
&= \left|\frac{1}{n} { \sum_{i=1}^n } \ln(\max(f_0(X_i), \rho)) - {\E}_{X\sim f_0}\left[\ln f_0(X)\right]\right| \notag \\
&\leq \left|\frac{1}{n} { \sum_{i=1}^n } \ln(\max(f_0(X_i), \rho)) - {\E}_{X\sim f_0}\left[\ln(\max(f_0(X), \rho)) \right]\right| \notag \notag \\
&~~ + \left| {\E}_{X\sim f_0}\left[\ln(\max(f_0(X), \rho)) \right] - {\E}_{X\sim f_0}\left[\ln f_0(X) \right]\right| \notag \\
&\leq \left|\frac{1}{n} { \sum_{i=1}^n } \ln(\max(f_0(X_i), \rho)) - {\E}_{X\sim f_0}\left[\ln(\max(f_0(X), \rho)) \right]\right| \notag \\
&~~ + \int_{-\infty}^{\ln\rho} \Pr[\ln f_0(X) { \leq } T] dT \;. \label{eq:hoeff_exp}
\end{align}
By Hoeffding's inequality we have 
\begin{align}
\Pr&\left[\left|\frac{1}{n} { \sum_{i=1}^n } \ln(\max(f_0(X_i), \rho)) - {\E}_{X\sim f_0}\left[\ln(\max(f_0(X), \rho)) \right]\right| > \frac{\eps}{16} \right] \notag \\
&\leq 2 \exp \left(\frac{-2 n^2 (\eps / 16)^2}{n \cdot (\ln M_{f_0} - \ln \rho)^2}\right) \notag \\
&\leq 2 \exp \left(\frac{-n \eps^2/16^2}{(\ln(100 n^4/\tau^2))^2}\right) \notag \\
&\leq \tau/3  \label{eq:exp_pmin}  \;. & \text{(since $n \geq N_1$)}
\end{align}
Next we have
\begin{align}
\int_{-\infty}^{\ln \rho} {\Pr}_{{X\sim f_0}}[\ln f_0(X) { \leq } T] dT &\leq \int_{0}^{\infty} {\Pr}_{{X\sim f_0}}[\ln f_0(X) { \leq } \ln\rho - y] dy \notag &\text{(setting $y = \ln\rho - T$)} \\
&\leq \int_{0}^{\infty} O(d)^d (\rho / M_{f_0})^{1/2} e^{-y/2} dy  & \text{(by Lemma \ref{lem:DKS1})} \notag \\
&= \int_{0}^{\infty} O(d)^d \frac{\tau}{10n^2} e^{-y/2} dy & \text{($\rho = M_{f_0} / (100 n^4/\tau^2)$)} \notag \\
&\leq 2 \cdot O(d)^d \frac{\tau}{10n^2} \notag \\
&\leq \eps / 16. & \text{(since $n\geq N_1$)} \label{eq:exp_tail}
\end{align}
By applying \eqref{eq:exp_pmin} and \eqref{eq:exp_tail} to bound \eqref{eq:hoeff_exp} from above, 
with probability at least $1-\tau/3$ we have  
that $$\left|\frac{1}{n} { \sum_{i=1}^n } \ln f_0(X_i) - {\E}_{X\sim f_0}\left[\ln f_0(X)\right]\right| \leq \eps/8 \;,$$
which concludes the proof.

\subsection{ {Proof of Lemma \ref{lem:mle_support}}}
Suppose there exists $x \in \mathbb{R}^d \setminus C$ such that $\mle(x) > 0$.
Then, we have that $L_{\mle}(\mle(x)) \setminus C \neq \emptyset$ and thus 
$\int_{\mathbb{R}^d \setminus C} \mle(x) dx > 0$.
From this, it follows that
$\int_{{C}} \mle(x) dx < 1$,
and so there exists some $\alpha > 1$ 
such that $\alpha \int_{{C}} \mle(x) dx = 1$.
{Let $\hat{g}_n : C \rightarrow \R$ be such that $\hat{g}_n = \alpha \cdot \mle |_C$}.
Since $C$ is a convex set and $\int_{C} \hat{g}_n(x) dx = 1$, we have that $\hat{g}_n$ is a log-concave density.
Observe that
\begin{align}
\frac{1}{n} { \sum_{i=1}^n } \log(\hat{g}_n(X_i)) &= \frac{1}{n} { \sum_{i=1}^n } \log(\alpha \hat{f}_n (X_i)) 
> \frac{1}{n} { \sum_{i=1}^n } \log(\mle (X_i)) \;, \label{mle_S}
\end{align}
where we used that $\alpha>1$.
By definition, $\mle$ maximizes $\frac{1}{n} { \sum_{i=1}^n } \log(f(X_i))$ 
over all log-concave densities $f$, which contradicts \eqref{mle_S}.
Therefore, for all $x \in \mathbb{R}^d \setminus C$, we have that $\mle(x) = 0$.

\subsection{ {Proof of Lemma \ref{lem:mle_mf}}}
\nnew{This lemma holds because for a density $f$ with a large maximum value $M_f$, $f$ is small outside a set of small volume, and most of the samples drawn from $f_0$ will be outside this set.}
Let
\[
\gamma = \exp{\left( 2 \left( \frac{1}{n}  {\sum_{i=1}^{n} \ln f_0(X_i)} - \frac{1}{2}\ln M_f - 1 \right) \right)}
\]
and
\[
A = L_f(\gamma).
\]
If we have that $\vol(A) \cdot M_{f_0} \leq 1/3$, then it follows that $f_0(A) \leq 1/3$. Since $f$ is log-concave, $A$ is a convex set, and since we condition on Corollary~\ref{cor:imp_convex_set_prob} holding, we have with probability $1$ that $|f_0(A) - f_n(A)| < \delta < 1/6$. Therefore, we have that $f_n(A) < 1/2$, in which case at least $1/2$ of the samples  {$X_1, \ldots, X_n$} are not contained within $A$. Thus, we have that
\begin{align*}
\frac{1}{n}  {\sum_{i=1}^{n}} \ln f(x) &\leq \frac{1}{2}\ln \gamma + \frac{1}{2}\ln M_f \\
&= \frac{1}{2} \cdot 2 \left( \frac{1}{n}  {\sum_{i=1}^{n} \ln f_0(X_i)} - \frac{1}{2}\ln M_f - 1 \right) + \frac{1}{2}\ln M_f \\
&< \frac{1}{n}  {\sum_{i=1}^{n} \ln f_0(X_i)}.
\end{align*}
Now we check to see how large $M_f$ must be to ensure that $\vol(A) \cdot M_{f_0} \leq 1/3$.
We have that
\begin{align*}
\vol(A) \cdot M_{f_0} &= \vol(L_f(\gamma)) \cdot M_{f_0} \\
&= \vol\left(L_f\left( M_f \cdot \exp{\left( \frac{2}{n}  {\sum_{i=1}^{n}  \ln f_0(X_i)} - 2  -2 \ln M_f \right) } \right)\right) \cdot M_{f_0} \\
&\leq \frac{M_{f_0}}{M_f} \cdot O\left( \left(2 - \frac{2}{n}  {\sum_{i=1}^{n}  \ln f_0(X_i)}  +2 \ln M_f \right)^d \right)\;. &\text{(by Lemma \ref{lem:DKS1})}
\end{align*}
Since we condition on the event of Lemma~\ref{lem:f_0_bound} holding, we have with probability $1$ that
\[
\frac{1}{n}  {\sum_{i=1}^{n}  \ln f_0(X_i)} \geq {\E}_{X \sim f_0} \left[\ln f_0(X)\right] - \eps \geq \ln \pmin - \eps,
\]
and so we have that
\begin{align*}
\vol(A) \cdot M_{f_0} &\leq \frac{M_{f_0}}{M_f} \cdot O\left( \left(2 + 2\ln M_f - 2\ln \pmin + 2\eps \right)^d \right) \\
&< \frac{M_{f_0}}{M_f} \cdot O\left( \left(2\ln M_f - 2\ln M_{f_0} + 3\ln(n^4 100/\tau^2) \right)^d \right). \\
\end{align*}
The following claim follows by a simple calculation:
\begin{claim}\label{claim:m_f_2}
 {If $\ln(M_f / M_{f_0}) \geq 3\ln(100n^4/\tau^2)$, then $\vol(A) \cdot M_{f_0} \leq 1/3$.}
\end{claim}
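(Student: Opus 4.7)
The plan is to substitute the hypothesis $\ln(M_f/M_{f_0}) \geq 3\ln(100n^4/\tau^2)$ into the estimate derived just before the claim, namely
\[
\vol(A) \cdot M_{f_0} \;\leq\; \frac{M_{f_0}}{M_f}\cdot O\!\left(\left(2\ln M_f - 2\ln M_{f_0} + 3\ln(100 n^4/\tau^2)\right)^d\right),
\]
and verify that the exponential decay $M_{f_0}/M_f$ dominates the polylogarithmic growth in the parenthesis. To this end, I would set the shorthands $R \eqdef \ln(M_f/M_{f_0})$ and $Z \eqdef \ln(100n^4/\tau^2)$, so that the estimate becomes $\vol(A)\cdot M_{f_0} \leq e^{-R}\cdot O((2R+3Z)^d)$, and the hypothesis becomes $R \geq 3Z$.

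Given $R \geq 3Z$, I would use $3Z \leq R$ to bound $2R + 3Z \leq 3R$, reducing the target inequality to $e^{-R}\cdot O((3R)^d) \leq 1/3$. Taking logarithms, it suffices to show that $R - d\ln(3R)$ exceeds a fixed absolute constant. The bulk of the argument is then to check that this separation follows from the lower bound $n \geq N_1$ on the sample size. Recall $N_1 = \Theta\bigl((d^2/\eps)\ln^3(d/(\eps\tau))\bigr)^{(d+3)/2}$, so $\ln n = \Omega(d\ln d)$ and hence $Z = 4\ln n + O(1) = \Omega(d\ln d)$ with a sufficiently large hidden constant. Thus $R \geq 3Z$ is linear in $d\ln d$ with a large constant, while $d\ln(3R) = O(d\ln d)$ with an absolute constant, so for large enough choice of the constant in $N_1$ the difference $R - d\ln(3R)$ is at least $\ln 3$, giving $e^{-R}(3R)^d \leq 1/3$ as desired.

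The only step that requires genuine care is the last one: verifying that the implicit constants inside $N_1$ are chosen large enough that the $\Omega(d\ln d)$ lower bound on $Z$ really does beat the $d\ln(3R)$ correction. This is a routine asymptotic comparison and mirrors calculations used in the previous claims (in particular, Claim \ref{claim:hard_vc} and Claim \ref{claim:easy_vc}), so I would simply invoke ``by choosing the universal constant hidden in $N_1$ large enough'' and conclude. No deeper structural obstacle arises; the lemma is essentially a consequence of Lemma \ref{lem:DKS1}'s polynomial volume bound on superlevel sets combined with the superexponential gap forced by the hypothesis on $M_f/M_{f_0}$.
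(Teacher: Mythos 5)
Your proposal follows essentially the same route as the paper: substitute the hypothesis into the displayed volume bound, write $R=\ln(M_f/M_{f_0})$ and $Z=\ln(100n^4/\tau^2)$, use $R\geq 3Z$ to reduce to $e^{-R}\cdot O((3R)^d)\leq 1/3$, and then verify the comparison using the largeness of the constant in $N_1$. The one loose point is your assertion that $d\ln(3R)=O(d\ln d)$ ``with an absolute constant'': since $M_f$, and hence $R$, is unbounded, this is false as stated; you need to observe that $R-d\ln(3R)$ is increasing in $R$ (for $R\geq d$), so that it suffices to check the inequality at the minimal value $R=3Z$ — this is exactly the role of the derivative computation for $(3\ln x)^{d/2}$ in the paper's proof, and with that monotonicity step added your argument is complete.
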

\begin{proof}
Recall that
\begin{align*}
\vol(A) \cdot M_{f_0} &\leq \frac{M_{f_0}}{M_f} \cdot O\left( \left(2 + 2\ln M_f - 2\ln \pmin + 2\eps \right)^d \right) \\
&< \frac{M_{f_0}}{M_f} \cdot O\left( \left(2\ln M_f - 2\ln M_{f_0} + 3\ln(n^4 100/\tau^2) \right)^d \right). \\
\end{align*}
We search for $M_f$ such that $\vol(A) \cdot M_{f_0} \leq 1/3$. It is sufficient for $M_f$ to satisfy, for some constant $c>1$,
\nnew{\begin{align}
M_{f_0} / M_f \cdot c \left(2\ln M_f - 2\ln M_{f_0} + 3\ln(n^4 100/\tau^2) \right)^d &\leq 1/3 \notag \\
\ln\left( (M_{f_0} / M_f) \cdot c \left(2\ln(M_f / M_{f_0}) + 3\ln(n^4 100/\tau^2) \right)^d \right) &\leq \ln(1/3) \notag \\
\ln\left( M_{f_0} / M_f \right) + \ln c + \ln\left( \left(2\ln(M_f / M_{f_0}) + 3\ln(n^4 100/\tau^2) \right)^d \right) &\leq \ln(1/3) \notag \\
\ln\left( \left(2\ln(M_f / M_{f_0}) + 3\ln(n^4 100/\tau^2) \right)^d \right) + \ln(3c) &\leq \ln\left( M_f / M_{f_0} \right) \notag \\
d \ln\left( 2\ln(M_f / M_{f_0}) + 3\ln(n^4 100/\tau^2) \right) + \ln(3c) &\leq \ln\left( M_f / M_{f_0} \right). \label{eq:m_f_1}
\end{align}}
If we have $M_f$ such that $\ln(M_f / M_{f_0}) \geq 3\ln(n^4 100/\tau^2)$, 
and a sufficiently large constant is chosen for  {$N_1$} so that $\ln(3c) \leq \ln(n^4 100/\tau^2)$, 
then \eqref{eq:m_f_1} becomes
\begin{align}
\nnew{d \ln\left( 3\ln(M_f / M_{f_0}) \right)} &\leq 2\ln\left( M_f / M_{f_0} \right). \label{eq:m_f_2}
\end{align}
The next inequality is equivalent to \eqref{eq:m_f_2} {:}
\begin{align*}
\nnew{\left( 3\ln(M_f / M_{f_0}) \right)^{d/2}} &\leq M_f / M_{f_0} 
\end{align*}
We note that the derivative of \nnew{$(3 \ln x)^{d/2}$} is 
\[
\nnew{\frac{3^{d/2} d (\ln x)^{d/2 - 1}}{2x}}.
\]
We also note that for \nnew{$x = (3d)^{d/2 + 1}(\ln(9d))^{d/2+1}$} we have that
\[
\nnew{\frac{3^{d/2} d (\ln x)^{d/2 - 1}}{2x} \leq 1}.
\]
and
\nnew{\begin{align*}
(3\ln x)^{d/2} &= [3(d/2 + 1)\ln(9d\ln(9d)]^{d/2} \\
&= 3^{d/2} \cdot d^{d/2} \cdot [2\ln(9d)]^{d/2} \\
&\leq x.
\end{align*}}
Therefore, assuming sufficiently large constants are chosen in the definition of  {$N_1$}, if 
\[
\ln(M_f / M_{f_0}) \geq 3\ln(n^4 100/\tau^2)
\]
then $\vol(A) \cdot M_{f_0} \leq 1/3$.
\end{proof}
 {Therefore, for $\ln(M_f / M_{f_0}) \geq 3\ln(100n^4/\tau^2)$ we have that}
$
\frac{1}{n}  {\sum_{i=1}^{n} \ln f(X_i) } < \frac{1}{n} \sum_i \ln f_0(x_i)
$
 {and}
\begin{align*}
\ln M_f - \ln \pmin 
&= \ln(M_f / M_{f_0}) +  {\ln(100n^4/\tau^2)}
\geq  {4}\ln(100n^4/\tau^2) \;,
\end{align*}
concluding the proof.

\subsection{ {Proof of Claim~\ref{claim:easy_vc}}}
By Lemma \ref{lem:VC_polytopes} we have that the VC dimension of ${\cal A}$ is $V\leq 2(d+1)H \ln((d+1)H)$, and so
$V 
\leq (10 \kappa)^{(d+1)/2} d^{(d+5)/2} (\ln(100 n^4/\tau^2))^{d}/\delta)^{(d+1)/2}$.
Noting that $\E[|f_0(T)-f_n(T)|] \leq \E[||f_0 - f_n||_{\mathcal{A}}]$, by Theorem \ref{thm:vc} we get that
\begin{align}
\E[|f_0(T)-f_n(T)|]
&\leq \sqrt{\frac{O(V)}{n}} \notag \\
&\leq \sqrt{ \frac{ O\left( (10 \kappa)^{(d+1)/2} d^{(d+5)/2} (\ln(100 n^4/\tau^2))^{d}/\delta)^{(d+1)/2} \right) }{n} }. \notag
\end{align}
For the next part we want that $\E[|f_0(T)-f_n(T)|] \leq \delta / 10$.
This holds when 
\begin{align*}
n &= \Omega\left( (d/\eps) (\ln(100 n^4/\tau^2))^{(d+1)} \right)^{(d+5)/2} 
\end{align*}
If $n \geq b \left(c^{d+1} (d^{(2d+3)}/\eps) (\ln(d^{(d+1)}/(\eps\tau)))^{(d+1)}  \right)^{(d+5)/2}$ for some constants $b > 1, c \geq 100 \ln c$, then we have
\begin{align*}
(d/\eps) (\ln(100 n^4/\tau^2))^{(d+1)}
&\leq (d^{(2d+3)}/\eps) (100 \ln c)^{(d+1)} \left( \ln(d^{(d+1)}/(\eps\tau)) \right)^{(d+1)} \\
&\leq c^{d+1} (d^{(2d+3)}/\eps) (\ln(d^{(d+1)}/(\eps\tau)) )^{(d+1)}
\end{align*}
and therefore $n = \Omega\left( (d/\eps) (\ln(100 n^4/\tau^2)^{(d+1)} \right)^{(d+5)/2}$ as desired.
Therefore, for $n \geq N_2$ we have
\begin{align}
\E[|f_0(T)-f_n(T)|] &\leq \delta / 10. \label{eq:VC_T}
\end{align}

\subsection{Proof of Lemma~\ref{lem:vc_sets}}
Consider an arbitrary set $T$ of $t$ points in $\mathbb{R}^d$. We wish to bound the number of possible distinct sets that can be obtained by the intersection of $T$ with a set in $\mathcal{A}_{H,L}$.
We note that $\mathcal{A}_{H,L}$ can also be constructed in the following manner: Take an arrangement consisting of at most  {$H \cdot L$} hyperplanes. This arrangement partitions $\mathcal{R}^d$ into a set of components. Then, the union of subsets of these components are elements of $\mathcal{A}_{H,L}$.
Any halfspace can be perturbed, without changing its intersection with $T$, so that its boundary intersects $d'+1$ points in $T$, where $d'\leq d$ is the dimension of the affine subspace spanned by $T$.
Any such subset uniquely determines the intersection of the halfspace with $T$.
Therefore, the number of possible intersections with a set of size $t$ is at most $O(t)^d$.
It follows then that the number of possible intersections of any $A \in \mathcal{A}_{H,L}$ and any set of size $t$ is at most
$(O(t)^d)^{LH} \leq O(t)^{dLH}$.
If $\mathcal{A}$ has VC dimension $t$, then is must be that
$O(t)^{dLH} \geq 2^t$,
and therefore
$t/\log(t) = O(dLH)$.

\subsection{ {Proof of Claim~\ref{claim:hard_vc}}}
Recalling that $L= \ln(100 n^4/\tau^2)$ and $H = (10 \kappa d / \delta)^{(d-1)/2}$, we have that
\begin{align}
V / \ln(V) &= O\left( d \cdot \ln(100 n^4/\tau^2) \cdot (10 \kappa d / \delta)^{(d-1)/2} \right) \notag \\
&= O\left( (10 \kappa)^{(d-1)/2} d^{(d+1)/2} \ln(100 n^4/\tau^2) / \delta^{(d-1)/2} \right). \label{eq:improved_v}
\end{align}
We note that
\begin{align*}
\ln\left( (10 \kappa)^{(d-1)/2} d^{(d+3)/2} (\ln(100 n^4/\tau^2))^2 / \delta^{(d-1)/2} \right)
&\leq \frac{d-1}{2} \ln\left( (10 \kappa) d^{3} (\ln(100 n^4/\tau^2))^{6} / \delta \right) \\
&\leq d \ln\left( (10 \kappa) d^{3} (\ln(100 n^4/\tau^2))^{6} / \delta \right) \\
&\leq c d \ln( \ln(100 n^4/\tau^2) )
\end{align*}
for some sufficiently large constant $c$. 
Therefore, letting
\[
V = O\left( (10 \kappa)^{(d-1)/2} d^{(d+3)/2} (\ln(100 n^4/\tau^2))^2 / \delta^{(d-1)/2} \right)
\]
satisfies \eqref{eq:improved_v}.
Therefore, we have that 
\begin{align}
\sqrt{\frac{\alpha V}{n}}
&= \sqrt{\frac{\alpha \cdot O\left( (10 \kappa)^{(d-1)/2} d^{(d+3)/2} (\ln(100 n^4/\tau^2))^2 / \delta^{(d-1)/2} \right) }{ n } }, \notag
\end{align}
and thus when
\begin{align}
n &= \Omega\left( (10 \kappa)^{(d-1)/2} d^{(d+3)/2} (\ln(100 n^4/\tau^2))^{(d+7)/2} / \eps^{(d+3)/2} \right) \label{eq:v_improved_n}
\end{align}
we have that $\sqrt{\frac{\alpha V}{n}} \leq \delta/10$.
To simplify \eqref{eq:v_improved_n}, we note that the 
\[
d^{(d+3)/2} (\ln(100 n^4/\tau^2))^{(d+7)/2} / \eps^{(d+3)/2} \leq \left( (d/\eps) (\ln(100 n^4/\tau^2))^{2} \right)^{(d+3)/2}.
\] 
Thus, if we let $n = (c (d^{2}/\eps) (\ln(d/(\eps\tau)))^{3})^{(d+3)/2}$ for some large constant $c$, then we have that
\begin{align*}
\ln(100 n^4/\tau^2) 
&= \frac{d+3}{2} \ln(100c (d^{2}/\eps) (\ln(d/(\eps\tau)))^{3}) + \ln(1/\tau^2) \\
&\leq c' d \ln(d/(\eps\tau))
\end{align*}
for some large constant $c'$.
Thus, assuming a sufficiently large constant is chosen, for $n \geq N_1$ we have that \eqref{eq:v_improved_n} holds, and therefore
\begin{align}
\sqrt{\frac{\alpha V}{n}} \leq \delta / 10. \label{eq:improved_expect}
\end{align}

\subsection{ {Proof of Claim~\ref{claim:hard_vc_cl}}}
\nnew{
For any choice of the samples  {$X_1,\ldots,X_n$}, we have
\begin{align}
f_n(C_L) &\geq f_n(\cin) & \text{(since $\cin \subseteq C_L$)} \notag \\
 &\geq f_0(\cin) - |f_0(\cin)-f_n(\cin)| \notag \\
 &= f_0(C_L) - f_0(C_L\setminus \cin) - |f_0(\cin)-f_n(\cin)| \notag \\
 &\geq f_0(C_L) - \frac{\delta}{2} - |f_0(\cin)-f_n(\cin)|. & \text{(by  \eqref{eq:imp_f0_Cp_T})} \label{eq:imp_fn_Cp_T_1}
\end{align}
Similarly, we have
\begin{align}
f_n(C_L) &\leq f_n(\cout) & \text{(since $C_L\subseteq \cout$)} \notag \\
 &\leq f_0(\cout) + |f_0(\cout)-f_n(\cout)| \notag \\
 &= f_0(C_L) + f_0(\cout\setminus C_L) - |f_0(\cout)-f_n(\cout)| \notag \\
 &\leq f_0(C_L) + \frac{\delta}{2} + |f_0(\cout)-f_n(\cout)|. & \text{(by  \eqref{eq:imp_f0_Tp_Cp})} \label{eq:imp_fn_Cp_Tp_1}
\end{align}
Combining \eqref{eq:imp_fn_Cp_T_1} and \eqref{eq:imp_fn_Cp_Tp_1}, we therefore have that
\begin{align*}
|f_n(C_L) - f_0(C_L)| \leq \frac{\delta}{2} + \max\left\{ |f_0(\cin)-f_n(\cin)|, |f_0(\cout)-f_n(\cout)| \right\} \label{eq:fn_f0_max}
\end{align*}
From this, we therefore have that
\begin{align*}
\sup_{C \in \mathcal{C} } |f_n(C_L)-f_0(C_L)| \leq 7\delta/10,
\end{align*}
concluding the proof.
}

\end{document}